\newtheorem{Theorem}{Theorem}
\newtheorem{Definition}[Theorem]{Definition}
\theoremstyle{remark}
\newtheorem*{Remark}{Remark}
\begin{document}
\title{Simply improved averaging for coupled oscillators and weakly nonlinear waves}

\author{
	Molei Tao
	\thanks{E-mail: \href{mailto:mtao@gatech.edu}{mtao@gatech.edu}}
	\thanks{School of Mathematics, Georgia Institute of Technology, USA} 
}

\maketitle

\abstract{The long time effect of nonlinear perturbation to oscillatory linear systems can be characterized by the averaging method, and we consider first-order averaging for its simplest applicability to high-dimensional problems. Instead of the classical approach, in which one uses the pullback of linear flow to isolate slow variables and then approximate the effective dynamics by averaging, we propose an alternative coordinate transform that better approximates the mean of oscillations. This leads to a simple improvement of the averaged system, which will be shown both theoretically and numerically to provide a more accurate approximation. Three examples are then provided: in the first, a new device for wireless energy transfer modeled by two coupled oscillators was analyzed, and the results provide design guidance and performance quantification for the device; the second is a classical coupled oscillator problem (Fermi-Pasta-Ulam), for which we numerically observed improved accuracy beyond the theoretically justified timescale; the third is a nonlinearly perturbed first-order wave equation, which demonstrates the efficacy of improved averaging in an infinite dimensional setting.
}

Keywords: highly oscillatory systems, effective averaged dynamics, nonlinear scale-isolation transform, weighted Birkhoff averaging, Capacitive Parametric Ultrasonic Transducers (CPUT)

\section{Introduction and main results}
\label{sec_intro}
An important class of dynamical problems are highly oscillatory systems, which admit dynamics over at least two timescales with the fastest scale mainly consisting of oscillations. This article considers such systems in which fast oscillations are induced by linear components of differential equations, and additional weak nonlinearities create interactions between these oscillations. More precisely, consider
\begin{equation}
	\dot{X}(t)=\Omega X(t) + \epsilon F(X(t),t),	\qquad X(0)=X_0,
	\label{eq_canonicalSystem}
\end{equation}
where $X(t) \in \mathbb{R}^d \text{ or } \mathbb{C}^d$ (see also Section \ref{sec_PDE} for an infinite dimensional generalization), $\Omega$ is a skew-Hermitian operator (i.e., $\Omega^*=-\Omega$), and $\epsilon \ll 1$. $F$ is a bounded continuous function, with bounded continuous derivative with respect to $X$, and quasiperiodic\footnote{The requirement on quasiperiodicity can be relaxed as long as $\lim_{T\rightarrow\infty} \int_0^T e^{-\Omega t} F\left(e^{\Omega t}Y(t), t \right) dt / T$ exists, but for the simplicity of discussion it will be assumed.} in $t$; i.e., there exist constants $\nu_1,\cdots,\nu_n$ and some function $\hat{F}$ that is 1-periodic in each of its arguments except the first, such that $F(x,t)=\hat{F}(x,\nu_1 t,\cdots,\nu_n t)$ for all $x$ and $t$.

The skew-Hermitianity of $\Omega$ ensures that all its eigenvalues are imaginary. Let $\omega_1,\cdots,\omega_d$ be their absolute values, which represent the frequencies of oscillations originated from the linear term. Note $\omega_1,\cdots,\omega_d$ and $\nu_1,\cdots,\nu_n$ are assumed not to change with $\epsilon$.

At the $\mathcal{O}(1)$ timescale of this system (which will be called the fast or the microscopic scale), the role of the nonlinearity is only perturbative, and it induces an $\mathcal{O}(\epsilon)$ difference from the solution of $\dot{X}=\Omega X$. On the other hand, over the slow/macroscopic timescale of $\mathcal{O}(\epsilon^{-1})$, secular interactions between the linear part and the nonlinearity can globally change the linear solution --- a favorite example of ours is parametric-resonantly perturbed harmonic oscillator, for which one can have arbitrary marcoscopic behavior even though the microscopic behavior is restricted to nearly harmonic oscillations (see Section 3.2 in \cite{TaOw2016} and \cite{XiTa18}).

This article is mainly concerned with the quantification of how the interaction between the fast oscillations accumulates and effectively contributes to the macroscopic dynamics. The celebrated method of averaging already provided a tool for this quantification (see e.g., the monograph of \cite{sanders2007averaging}). To do so, the classical approach is to first introduce a coordinate transform to isolate the slow variable, and then average out the dependence on fast oscillations in its equation of motion. This article, on the other hand, proposes to use a different coordinate transform to improve the accuracy of an averaging approximation.

More precisely, classical averaging introduces $Y(t)=e^{-\Omega t}X(t)$ so that
\[
	\dot{Y}(t)=\epsilon e^{-\Omega t} F\left(e^{\Omega t}Y(t), t \right),	\]
and then this equation is approximated with the right hand side replaced by its time average:
\begin{equation}
	\dot{\bar{Y}}=\epsilon \left\langle e^{-\Omega t} F\left(e^{\Omega t}\bar{Y}, t \right) \right\rangle_t
	\label{eq_YbarRHS}
\end{equation}
See Section \ref{sec_classicalAveraging} for more details.

The improved averaging proposed here uses a different coordinate transform $X \mapsto Z$, implicitly defined via
\[
	X(t)=e^{\Omega t}Z(t)-\epsilon \widehat{\Omega^{-1}} C(Z(t)),
\]
where the additional $\mathcal{O}(\epsilon)$ term is a corrector that aims at accounting for a possibly nonzero mean of the oscillations. Here $\widehat{\Omega^{-1}}$ is essentially $\Omega^{-1}$, but the inverse of its zero eigenvalues (if any) will be replaced by zero. $C(Z)$ is a coarse estimation of the averaged value of the nonlinearity, defined as
\begin{equation}
	C(Z):=\left\langle F\left(e^{\Omega t}Z, t \right) \right\rangle_t .
	\label{eq_CZRHS}
\end{equation}
$Z$'s equation of motion can be written as
\[
	\dot{Z}(t) = \epsilon e^{-\Omega t} \left( F \left( e^{\Omega t}Z-\epsilon P(Z), t \right) - \Omega P(Z) \right) + \mathcal{O}(\epsilon^2),
\]
and we will approximate it by
\begin{equation}
	\dot{\bar{Z}}(t) = \epsilon \left\langle e^{-\Omega t} \left( F \left( e^{\Omega t}\bar{Z}-\epsilon P(\bar{Z}), t \right) - \Omega P(\bar{Z}) \right) \right\rangle_t
	\label{eq_ZbarRHS}
\end{equation}
See Section \ref{sec_improvedMethod} for more details.

Both $\bar{Y}$ and $\bar{Z}$ provide 1st-order approximations of the true solution in the sense that $\bar{Y}(t)-Y(t)=\mathcal{O}(\epsilon)$ and $\bar{Z}(t)-Z(t)=\mathcal{O}(\epsilon)$ till at least $t=\mathcal{O}(\epsilon^{-1})$. However, the proposed method is more accurate because $\dot{Z}-\dot{\bar{Z}}$ is bounded by $\dot{Y}-\dot{\bar{Y}}$ in a sense that will be defined in Section \ref{sec_improvedAccuracy}, where formal theoretical justifications will also be provided. Worth clarifying is, the improved approach is still only 1st-order in $\epsilon$, and generally less accurate than 2nd-order averaging; however, the algebra of 2nd-order averaging can be formidable for high-dimensional problems.

In addition to the theoretical discussion, the improved accuracy will also be numerically quantified and demonstrated by three examples, the first two being oscillators coupled through weak nonlinearities, and the third being a nonlinearly perturbed first order wave equation, which is infinite dimensional but still within the scope of \eqref{eq_canonicalSystem}. More specifically, Section \ref{sec_CPUT} analyzes a new engineering device for wireless energy transfer, where the improved averaging allows a performance quantification for which classical averaging is not accurate enough, and the obtained analytical results guided us to design device parameters for its desired operations in \cite{CPUT2018IEEE}. Section \ref{sec_FPU} records improved numerical accuracies on a classical test problem of Fermi-Pasta-Ulam, and the improvement was beyond the theoretically justified timescale but for a longer time. Section \ref{sec_PDE} compares how classical averaging and improved averaging capture the long time behavior of weakly nonlinear waves modeled by an advection-reaction PDE; for this problem, even though our method generalizes to infinite dimensions, the numerical computations are still conducted after spatial discretization, in this case via pseudospectral method.

Time averages such as the right hand sides of \eqref{eq_YbarRHS}, \eqref{eq_CZRHS}, \eqref{eq_ZbarRHS} play an essential role in averaging approximations, and they can be computed either analytically or numerically. Section \ref{sec_CPUT} conducts this analytically, Section \ref{sec_FPU} does it both analytically and numerically because the analytical expressions are too complex, and Section \ref{sec_PDE} mainly does it numerically because an analytical result may no longer be available. To compute time averages numerically, we use the standard composite trapezoidal rule when the function to be averaged is periodic with known period; in other cases (when the function is quasiperiodic or with unknown period), we use the recently developed tool of weighted Birkhoff averaging \cite{das2017quantitative}, which achieves accuracy at a relatively low computational cost. Details of the time-average computations are described in Section \ref{sec_numAvg}.

While the focus of this article is the alternative coordinate transformation that improves averaging, one of its implications is an efficient numerical simulation of highly oscillatory problems, based on integrating the averaged equations. For the sake of length, we will discuss the details of this application in a separate paper. Nevertheless, it is necessary to mention great existing works in the active field of highly oscillatory system simulations, and the list by no means can be complete. For instance, (i) since the formulation of envelope following method \cite{petzold1997numerical}, clever approaches that address one high-frequency (which, in the case of eq.\ref{eq_canonicalSystem}, corresponds to $\Omega$ with eigenvalues all being integer multiples of one value) have been continuously constructed, such as multi-revolution composition method \cite{chartier2014multi,chartier2017convergence}, stroboscopic averaging method (which can be made high-order in $\epsilon$) \cite{calvo2012stroboscopic,calvo2011numerical,castella2015stroboscopic}, the two-scaled reformulation (which is uniformly accurate after adding one artificial dimension) \cite{chartier2015uniformly}, and additional uniformly accurate approaches based on various formal asymptotic expansions \cite{bao2012uniformly,bao2014uniformly,bao2016uniformly, crouseilles2017nonlinear}. Tools for performance analysis have been proposed too, such as modulated Fourier expansion \cite{hairer2000long, cohen2008long}, which in turn accelerates the development of numerical methods, for instance, for second-order differential equations (see e.g., a seminal work \cite{cohen2003modulated}, a specific investigation \cite{sanz2009modulated}, and a recent work \cite{zhao2017uniformly} in which 2nd-order uniform accuracy was achieved on half of the variables for polynomial potentials). More contributions to second-order equations include \cite{LeBris:07, CaCha:09}.
(ii) When there are multiple high-frequencies, exponential and trigonometric integrators can sometimes provide accurate simulations, and we refer to \cite{hochbruck2010exponential, hochbruck1998exponential, cox2002exponential, kassam2005fourth, Grubmuller:91, Tuckerman:92, wisdom1991symplectic, gautschi1961numerical, deuflhard1979study, Skeel:99, hochbruck1999gautschi, hairer2000long, moler2003nineteen,saad1992analysis,hochbruck1997krylov,SIM2} (see Section \ref{sec_numAvg} for more discussions). An interesting alternative idea is to approximate multiple frequencies by integer multiples of one frequency, which was realized by combining the two-scale reformulation and multi-revolution composition \cite{chartier2017highly}. Another direction of developments was based on the numerical integration of highly oscillatory functions \cite{iserles2004numerical, iserles2004quadrature, iserles2005efficient, levin1996fast}, and mainly investigated were the simulations of second-order equations \cite{khanamiryan2008quadrature,wang2013filon,wang2016arbitrary}; usually one will obtain an iterative method due to implicitness. 
Related is an application of Hamiltonian Boundary Value Methods, which constructs implicit methods for Newtonian second-order problems \cite{brugnano2017effectiveness}. Also worth mentioning is \cite{condon2010second}, which considered a 1 degree-of-freedom, however possibly nonlinear system, subject to highly oscillatory forcing terms. (iii) Another important direction is to construct methods that utilize the averaging operator $\langle \cdot \rangle$ more directly. One notable contribution to linear oscillations coupled via nonlinearity is \cite{haut2014asymptotic}, which was based on classical averaging \eqref{eq_YbarRHS} (with additional help from Parareal iterations). We also mention \cite{LeRe01} which averages forces in Newtonian second-order problems. 
Remarkably, continuous efforts have been made to develop up-scaled solvers for systems with more general fast dynamics not limited to multi-frequency oscillatory. These include the Heterogeneous Multiscale Methods (see \cite{HMMoriginal, MR2314852, abdulle2012heterogeneous} for reviews and \cite{Ariel:08, calvo2010heterogeneous, ariel2013multiscale, ariel2016parareal} for specific investigations on highly oscillatory problems), equation-free approaches (e.g., \cite{MR2041455, kevrekidis2004equation, KevGio09}), discussions on Young measure \cite{Art07b, Art07}, FLAVORs \cite{FLAVOR10,FLAVORPDE11} (which are inspired by the formers but adapted to problems with preserved structures such as symplecticity or invariant distribution), and more recently, some promising ideas that aimed at iteratively improving the accuracy of general multiscale numerical methods via the Parareal parallelism \cite{ariel2016parareal}.

\section{Methods}
\label{sec_method}
\subsection{A review of the classical averaging}
\label{sec_classicalAveraging}
Having both $\mathcal{O}(1)$ and $\mathcal{O}(\epsilon)$ terms on the right hand side of \eqref{eq_canonicalSystem} makes the dynamics to contain a mixture of both slow and fast components. 
To quantify the long term effect of the $\mathcal{O}(\epsilon)$ term (i.e., the nonlinearity), a classical approach (see e.g., \cite{sanders2007averaging}) first introduces the coordinate transform
\begin{equation}
	Y(t)=e^{-\Omega t}X(t),	\qquad \text{which leads to}
	\label{eq_transformOld}
\end{equation}
\begin{equation}
	\dot{Y}(t)=\epsilon e^{-\Omega t} F\left(e^{\Omega t}Y(t), t \right),	\qquad Y(0)=X_0.
	\label{eq_oldUnaveraged}
\end{equation}
This way, a slow variable $Y$ is isolated (note its rate of change is $\dot{Y}=\mathcal{O}(\epsilon)$). However, $Y$ dynamics are still modulated by fast oscillations due to the $t$ dependence in the RHS of \eqref{eq_oldUnaveraged}. Therefore, the method of averaging is then employed to remove the fast oscillations to allow a focus on the purely slow dynamics: let
\begin{equation}
	\mathcal{F}(Y,t):= e^{-\Omega t} F\left(e^{\Omega t}Y, t \right)
	\quad \text{and} \quad
	\bar{\mathcal{F}}(Y) := \left\langle \mathcal{F}(Y,t) \right\rangle_t 
	\label{eq_oldAveragedVectorField}
\end{equation}
where the time averaging operator is generally defined as
\begin{equation}
	\langle \mathcal{F}(x,t) \rangle_t := \lim_{T\rightarrow\infty} \frac{1}{T}\int_0^T \mathcal{F}(x,t) dt
	\label{eq_curlyF}
\end{equation}
(note in the special case of $\mathcal{F}$ being $T$-periodic in $t$, we also have $\langle \mathcal{F}(x,t) \rangle_t =  \frac{1}{T}\int_0^T \mathcal{F}(x,t) dt$), and consider the averaged dynamics
\begin{equation}
	\dot{\bar{Y}}=\epsilon \bar{\mathcal{F}}(\bar{Y}),	\qquad \bar{Y}(0)=X_0,
	\label{eq_OldAveraged}
\end{equation}
then $\|\bar{Y}(t)-Y(t)\|=\mathcal{O}(\epsilon)$ till at least $t=\mathcal{O}(\epsilon^{-1})$ (see \cite{sanders2007averaging} for details; note the quasiperiodicity of $F(X,t)$ and $\omega_i$'s and $\nu_j$'s being independent of $\epsilon$ are needed for ensuring this order of accuracy). 

The advantage of \eqref{eq_OldAveraged} is, as $\bar{Y}$ changes at the $\mathcal{O}(\epsilon^{-1})$ timescale (which can be seen via a time change in \eqref{eq_OldAveraged}), it is now purely slow. Insights about the system's effective dynamics can thus be gained by analyzing \eqref{eq_OldAveraged}. In fact, once $\bar{Y}(t)$ is known, $\hat{X}(t):=e^{\Omega t}\bar{Y}(t)$ will approximate $X(t)$, with error also $\mathcal{O}(\epsilon)$ till at least $t=\mathcal{O}(\epsilon^{-1})$.

\subsection{The simple improvement}
\label{sec_improvedMethod}
The approximate solution $\hat{X}$ contains a potential source of inaccuracy that can be corrected. To see the origin of the idea, consider a simplest ``nonlinearity'' of $F(X(t),t)=C$, which is a constant vector. The exact solution to \eqref{eq_canonicalSystem} is then
\[
	X(t)=e^{\Omega t} \big( X_0+\epsilon \Omega^{-1} C \big) - \epsilon \Omega^{-1} C,
\]
and it has a small but nonzero time average of $- \epsilon \Omega^{-1} C$, which is however not reflected in the approximation $\hat{X}(t)=e^{\Omega t}\bar{Y}(t)=e^{\Omega t}X_0$ obtained from classical averaging \eqref{eq_OldAveraged}. It is easy to see, however, that if one uses a coordinate transform of $Z=\exp(-\Omega t)(X+\epsilon\Omega^{-1}C)$ instead of $Y=\exp(-\Omega t)X$, this nonzero average will be recovered, and the exact solution too.

To extrapolate this idea to general problems, let
\begin{equation}
	C(Z):=\left\langle F\left(e^{\Omega t}Z, t \right) \right\rangle_t := \lim_{T\rightarrow\infty} \frac{1}{T}\int_0^T F\left(e^{\Omega t}Z, t \right) \, dt
	\label{eq_CZ}
\end{equation}
and define a coordinate transformation from $X$ to $Z$ implicitly via
\begin{equation}
	X(t)=e^{\Omega t}Z(t)-\epsilon P(Z(t)),
	\label{eq_transformNew}
\end{equation}
where $P(Z):=\Omega^{-1} C(Z)$ if $\Omega$ is invertible; otherwise we will assume $\Omega=V^{-1}[\omega_i]_{ii} V$, define $\widehat{\Omega^{-1}}:=V^{-1}[\widehat{\omega_i^{-1}}]_{ii} V$ where $\widehat{\omega_i^{-1}}$ is $\omega_i^{-1}$ if $\omega_i\neq 0$, and 0 otherwise, and let $P(Z):=\widehat{\Omega^{-1}} C(Z)$.

Taking the time derivative of \eqref{eq_transformNew} yields
\[
	\left( e^{\Omega t}-\epsilon P'(Z) \right) \dot{Z}(t) = \epsilon \left( F \left( e^{\Omega t}Z-\epsilon P(Z), t \right) - \Omega P(Z) \right),
\]
and therefore the equation of motion in the new coordinate can be written as
\begin{equation}
	\dot{Z}(t) = \epsilon e^{-\Omega t} \left( F \left( e^{\Omega t}Z-\epsilon P(Z), t \right) - \Omega P(Z) \right) + \mathcal{O}(\epsilon^2).
	\label{eq_newUnaveraged}
\end{equation}
The right hand side is $\mathcal{O}(\epsilon)$ and averaging can again provide an approximation: let 
\begin{equation}
	G(Z,t) := e^{-\Omega t} \left( F \left( e^{\Omega t}Z-\epsilon P(Z), t \right) - \Omega P(Z) \right), \qquad 
	\bar{G}(Z) := \left\langle G(Z,t) \right\rangle_t,
	\label{eq_newAveragedVectorField}
\end{equation}
and consider
\begin{equation}
	\dot{\bar{Z}} = \epsilon \bar{G}(\bar{Z}),
	\label{eq_newAveraged}
\end{equation}
then $\bar{X}(t):=e^{\Omega t}\bar{Z}(t)-\epsilon P(\bar{Z}(t))$ will be an $\mathcal{O}(\epsilon)$ approximation of $X(t)$ till at least $t=\mathcal{O}(\epsilon^{-1})$.

\begin{Remark}
	\eqref{eq_newAveragedVectorField} is equivalent to $\bar{G}(Z) = \left\langle e^{-\Omega t} F \left( e^{\Omega t}Z-\epsilon P(Z), t \right) \right\rangle_t$ because $\Omega P(Z)$ is independent of time and thus averaged out after being multiplied by $e^{-\Omega t}$. However, we prefer to use \eqref{eq_newAveragedVectorField} because although the exact time average of $e^{-\Omega t} \Omega P(Z)$ is zero, in Sections \ref{sec_numAvg}, \ref{sec_FPU} and \ref{sec_PDE} the time average will be approximated numerically, and keeping this term led to smaller errors in those experiments.
\end{Remark}

\begin{Remark}
	The initial condition of $\bar{Z}(0)$, in general, is no longer the same as $X(0)=X_0$ due to the $P(\cdot)$ term in \eqref{eq_transformNew}. When the nonlinearity of $P$ may prevent $\bar{Z}(0)$ from being exactly solvable as an explicit function of $X_0$, one can expand $\bar{Z}(0)$ as an asymptotic series in $\epsilon$ and match the leading orders to obtain the following approximation:
	\[
		\bar{Z}(0)=X_0+\epsilon P(X_0) + \mathcal{O}(\epsilon^2)
	\]
	Given the remainder is $\mathcal{O}(\epsilon^2)$, using $\bar{Z}(0) \approx X_0+\epsilon P(X_0)$ in the proposed averaged system \eqref{eq_newAveraged} will only induce an $\mathcal{O}(\epsilon^2)$ error till at least $t=\mathcal{O}(\epsilon^{-1})$, and thus the order of accuracy in $\bar{Z}$ or $\bar{X}$ is not affected.
\end{Remark}

\subsection{The improved accuracy}
\label{sec_improvedAccuracy}
It is intuitive to think that \eqref{eq_newAveraged} can provide a more accurate approximation than \eqref{eq_OldAveraged} because the former better characterizes the mean value of the oscillations, which is not necessarily zero. This intuition can be made precise in the following way.

\begin{Definition}
	Given column-vector-valued functions $w_1(t,x)$ and $w_2(t,x)$, consider an $x$-indexed inner product defined by
	\[
		(w_1 \cdot w_2)(x) := \langle w_1(t,x)^T w_2(t,x) \rangle_t.
	\]
	Denote by $\|\cdot\|(x)$ the induced norm.
\end{Definition}
Since \eqref{eq_newUnaveraged} can be alternatively written as
\[
	\dot{Z}(t) = \epsilon e^{-\Omega t} \left( F \left( e^{\Omega t}Z, t \right) - \Omega P(Z) \right) + \mathcal{O}(\epsilon^2),
\]
let
\begin{equation}
	\mathcal{G}(Z,t) := e^{-\Omega t} \left( F \left( e^{\Omega t}Z, t \right) - \Omega P(Z) \right), \quad\text{and}\quad 
	\bar{\mathcal{G}}(Z) := \left\langle G(Z,t) \right\rangle_t,
	\label{eq_newAveragedVectorFieldAlternative}
\end{equation}
and then the averaged vector field $\bar{\mathcal{G}}$ will also provide an approximation at the same order of accuracy as \eqref{eq_newAveraged} due to 1st-order averaging theory \cite{sanders2007averaging}. 
\begin{Theorem}
	Consider $\mathcal{F}$, $\bar{\mathcal{F}}$ in \eqref{eq_oldAveragedVectorField} and $\mathcal{G}$, $\bar{\mathcal{G}}$ in \eqref{eq_newAveragedVectorFieldAlternative}. Let $\Delta\mathcal{F}(x,t)=\mathcal{F}(x,t)-\bar{\mathcal{F}}(x)$ and $\Delta\mathcal{G}(x,t)=\mathcal{G}(x,t)-\bar{\mathcal{G}}(x)$. Then, $\big( (\Delta\mathcal{F}-\Delta\mathcal{G})\cdot \Delta\mathcal{G} \big)(x) = 0$ for any $x$ and therefore
	\[
		\|\Delta\mathcal{G}\|(x) \leq \|\Delta\mathcal{F}\|(x).
	\]
	\label{thm_compareAvgVectorFields}
\end{Theorem}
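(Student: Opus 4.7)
The plan is to exhibit $\Delta\mathcal{F}$ and $\Delta\mathcal{G}$ as differing by a piece that is orthogonal to $\Delta\mathcal{G}$ in the $(\,\cdot\,\cdot\,\cdot\,)(x)$ inner product, so that the stated inequality reduces to the Pythagorean identity $\|\Delta\mathcal{F}\|^{2}(x)=\|\Delta\mathcal{F}-\Delta\mathcal{G}\|^{2}(x)+\|\Delta\mathcal{G}\|^{2}(x)$.

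First I would compute the pointwise difference. From \eqref{eq_oldAveragedVectorField} and \eqref{eq_newAveragedVectorFieldAlternative} one immediately reads off
\[
\mathcal{F}(x,t)-\mathcal{G}(x,t)=e^{-\Omega t}\,\Omega P(x).
\]
Averaging in $t$, I would use that $\langle e^{-\Omega t}\rangle_{t}$ equals the orthogonal projector $\Pi_{0}$ onto $\ker\Omega$ (all nonzero eigenmodes oscillate and average to zero), while by construction $\Omega P(x)=\Omega\widehat{\Omega^{-1}}C(x)=(I-\Pi_{0})C(x)$ lies in $\operatorname{range}(\Omega)=(\ker\Omega)^{\perp}$. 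Hence $\bar{\mathcal{F}}(x)=\bar{\mathcal{G}}(x)$ and therefore
\[
\Delta\mathcal{F}(x,t)-\Delta\mathcal{G}(x,t)=e^{-\Omega t}\,\Omega P(x).
\]

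Next I would evaluate the inner product. Using $(e^{-\Omega t})^{T}=e^{\Omega t}$ (from the skew property of $\Omega$) and pulling the $t$-independent factor $\Omega P(x)$ out of the time average,
\[
\big((\Delta\mathcal{F}-\Delta\mathcal{G})\cdot\Delta\mathcal{G}\big)(x)=(\Omega P(x))^{T}\,\big\langle e^{\Omega t}\Delta\mathcal{G}(x,t)\big\rangle_{t}.
\]
Now $e^{\Omega t}\mathcal{G}(x,t)=F(e^{\Omega t}x,t)-\Omega P(x)$, whose time average is $C(x)-\Omega P(x)=\Pi_{0}C(x)\in\ker\Omega$; and $\langle e^{\Omega t}\rangle_{t}\bar{\mathcal{G}}(x)=\Pi_{0}\bar{\mathcal{G}}(x)\in\ker\Omega$. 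So $\langle e^{\Omega t}\Delta\mathcal{G}(x,t)\rangle_{t}\in\ker\Omega$, while $\Omega P(x)\in\operatorname{range}(\Omega)\perp\ker\Omega$, and the inner product vanishes; the norm bound then follows by Pythagoras.

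The only real subtlety is recognizing the orthogonal decomposition $\ker\Omega\oplus\operatorname{range}(\Omega)$ that a skew(-Hermitian) $\Omega$ affords, together with the fact that $\widehat{\Omega^{-1}}$ is defined precisely so that $\Omega P$ is the $\operatorname{range}(\Omega)$-component of $C$; once this is seen, the whole argument collapses to a range-vs-kernel orthogonality. A minor technicality is that the inner product uses an unconjugated transpose while $X$ may be complex, so to justify $(e^{-\Omega t})^{T}=e^{\Omega t}$ I would either restrict to a real realization of the system in which $\Omega$ is genuinely skew-symmetric, or reinterpret the inner product with a conjugate transpose and replace skew-symmetry with skew-Hermiticity in the same calculation.
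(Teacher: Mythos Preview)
Your proposal is correct and follows essentially the same route as the paper: identify $\Delta\mathcal{F}-\Delta\mathcal{G}=e^{-\Omega t}\Omega P(x)$, use $(e^{-\Omega t})^{T}e^{-\Omega t}=I$ to collapse the inner product, and finish with Pythagoras. Your framing via the orthogonal decomposition $\ker\Omega\oplus\operatorname{range}(\Omega)$ is in fact slightly cleaner than the paper's, which tacitly writes $\Omega P(x)=C(x)$ and $\langle e^{\Omega t}\rangle_{t}=0$ (i.e., assumes $\Omega$ invertible), whereas your use of $\Pi_{0}$ covers the singular case as well.
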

\begin{proof}
	Note $\left\langle e^{-\Omega t}\Omega P(x) \right\rangle_t = 0$ and thus $\bar{\mathcal{F}}(x)=\bar{\mathcal{G}}(x)$. Recall $\Omega P(x)=C(x)=\left\langle F\left(e^{\Omega t} x, t \right) \right\rangle_t$, and therefore,
	\begin{align*}
		&\quad \big( (\Delta\mathcal{F}-\Delta\mathcal{G})\cdot \Delta\mathcal{G} \big)(x) = \big( (\mathcal{F}-\mathcal{G})\cdot (\mathcal{G}-\bar{\mathcal{G}}) \big)(x) \\
		&= \left\langle 
			\left( e^{-\Omega t}\Omega P(x) \right)^T \left( e^{-\Omega t} \left( F\left(e^{\Omega t} x, t \right) - \Omega P(x) \right) - \bar{\mathcal{G}}(x) \right)
		   \right\rangle_t \\
		&= \left\langle C(x)^T \left( F\left(e^{\Omega t} x, t \right) - C(x) \right) \right\rangle_t - \left\langle \left(e^{-\Omega t}C(x) \right)^T \bar{\mathcal{G}}(x) \right\rangle_t \\
		&= C(x)^T \left\langle F\left(e^{\Omega t} x, t \right) \right\rangle_t - C(x)^T C(x) - C(x)^T \left\langle e^{\Omega t} \right\rangle_t \bar{\mathcal{G}}(x) \\
		&= C(x)^T C(x) - C(x)^T C(x) - 0 = 0
	\end{align*}
	This orthogonality naturally leads to $\|\Delta\mathcal{F}\|^2=\|\Delta\mathcal{G}\|^2+\|\Delta\mathcal{F}-\Delta\mathcal{G}\|^2$ and thus $\|\Delta\mathcal{F}\| \geq \|\Delta\mathcal{G}\|$.
\end{proof}

Theorem \ref{thm_compareAvgVectorFields} shows that the proposed approach has smaller difference between the unaveraged and averaged vector field than the classical approach. The following formal argument shows that this reduced difference decreases the amount of fluctuations in the true solution not captured by averaging: consider
\begin{equation}
	\dot{Y} = \epsilon \mathcal{F}(Y,t) + \mathcal{O}(\epsilon^2), \qquad
	\dot{Z} = \epsilon \mathcal{G}(Z,t) + \mathcal{O}(\epsilon^2).
	\label{eq_beforeNearId}
\end{equation}
Assume $\epsilon$ is small enough such that there exist near-identity transformations
\begin{equation}
	Y(t) = \bar{Y}(t)+\epsilon u(\bar{Y}(t),t) + \mathcal{O}(\epsilon^2) \qquad
	Z(t) = \bar{Z}(t)+\epsilon v(\bar{Z}(t),t) + \mathcal{O}(\epsilon^2)
	\label{eq_nearId}
\end{equation}
that implicitly define $\bar{Y}$ and $\bar{Z}$ such that
\begin{equation}
	\dot{\bar{Y}} = \epsilon \bar{\mathcal{F}}(\bar{Y}) + \mathcal{O}(\epsilon^2) \qquad
	\dot{\bar{Z}} = \epsilon \bar{\mathcal{G}}(\bar{Z}) + \mathcal{O}(\epsilon^2).
	\label{eq_afterNearId}
\end{equation}
Then the transformations must satisfy certain conditions. To find these conditions, substituting \eqref{eq_nearId} into \eqref{eq_beforeNearId} and using \eqref{eq_afterNearId} to match $\mathcal{O}(\epsilon)$ terms yields the first-order homological equations
\begin{equation}
	\bar{\mathcal{F}}(y)+\partial_t u(y,t) = \mathcal{F}(y,t),	\qquad
	\bar{\mathcal{G}}(z)+\partial_t v(z,t) = \mathcal{G}(z,t),
	\label{eq_homological}
\end{equation}
where $y$ and $z$ are viewed as parameters. The choices of
\begin{equation}
	\bar{\mathcal{F}}(y) = \langle \mathcal{F}(y,t) \rangle_t,	\qquad
	\bar{\mathcal{G}}(z) = \langle \mathcal{G}(z,t) \rangle_t
	\label{eq_solvability}
\end{equation}
guarantee the existence of $u$ and $v$ and reveal that they correspond to leading order fluctuations. To understand such choices, think about the easiest case when $\mathcal{F}(\cdot,t)$ is periodic in $t$, and then \eqref{eq_solvability} is the solvability condition for \eqref{eq_homological} to admit periodic-in-$t$ solutions.
	
	Theorem \ref{thm_compareAvgVectorFields} shows that $v$ is smaller than $u$ in the sense that $\|\partial_t v\|(x) \leq \|\partial_t u\|(x)$ for all $x$. Since $v$ and $u$ are respectively the leading-order fluctuations of the exact solution (in different coordinates) around $\bar{Z}$ and $\bar{Y}$, $\bar{Z}$ approximates the exact solution more accurately than $\bar{Y}$.

\begin{Remark}
	In practice, our preference is to use the averaged vector field $\epsilon\bar{G}$ (eq.'s \ref{eq_newAveragedVectorField} and \ref{eq_newAveraged}) instead of $\epsilon\bar{\mathcal{G}}$. Their difference is $\mathcal{O}(\epsilon^2)$, and the above discussion, which is based on 1st-order averaging / normal form theory, cannot tell this difference apart and remain inconclusive for comparing $\epsilon\bar{G}$ and $\epsilon\bar{\mathcal{G}}$. However, $\bar{G}$ was obtained with less approximations, and in numerical experiments considered in this article it produced better accuracy.
\end{Remark}

\begin{Remark}
	Although the proposed approach has improved accuracy, it is based on 1st-order averaging and its accuracy is still only $\mathcal{O}(\epsilon)$. It is possible to perform higher-order averaging to increase the order of accuracy; however, significantly more terms will need to be computed and the $X$ derivative(s) of $F$ will be involved, which can be a significant computational overhead especially when $X$ is high-dimensional. In addition, the result may be difficult to interpret because higher-order-averaged systems usually have much more involved vector fields, which can complicate the analysis of their dynamics.
\end{Remark}

\subsection{A brief discussion of numerical averaging}
\label{sec_numAvg}
Both classical averaging and the proposed improvement require the computation of $\langle f(x,t) \rangle_t$ for (quasi)periodic $f(x,\cdot)$. It is not always the case that this time average can or should be analytically computed. For instance,
\begin{itemize} 
\item
For the Fermi-Pasta-Ulam example discussed in Section \ref{sec_FPU}, $\bar{\mathcal{F}}$ (see \eqref{eq_oldAveragedVectorField}) has a lengthy expression (see the Appendix), which not only makes the analysis of the averaged system \eqref{eq_OldAveraged} difficult, but also slows down the numerical simulation due to expensive function evaluations; unfortunately, the expression of $\bar{G}$ (see \eqref{eq_newAveragedVectorField}) is only much lengthier (details available upon request).

\item
For the advection-reaction PDE example discussed in Section \ref{sec_PDE}, $\bar{\mathcal{F}}$ may or may not be analytically computable; meanwhile, $\bar{G}$ or $C(Z)$ \eqref{eq_CZ} do not admit closed form expressions due to the involvement of non-local spatial operations.

\item
Even finite-dimensional problem can require averages that are not exactly computable; an example is
\[
	\frac{1}{2\pi}\int_0^{2\pi} \frac{1}{\sqrt{(a+\cos t)^2+(b+\sin t)^2}} dt,
\]
which naturally originates from celestial mechanics or molecular dynamics when one tries to average gravitational / electric potential over a periodic orbit. The lack of closed form expression for this integral was one of the motivations for the introduction of elliptic functions (see e.g., \cite{abramowitz1964handbook}).
\end{itemize}

\noindent
Therefore, we numerically approximate time averages when necessary. Given $f(x,\cdot)$ with fixed $x$ (note $x$'s value changes when an averaged system is numerically simulated; also, in the improved method (Section \ref{sec_improvedMethod}) one needs to average two different functions, respectively for computing $C(\cdot)$ and $\bar{G}(\cdot)$), this is done in the following way:
\begin{itemize}
\item
	When $f(x,\cdot)$ is known to be $T$-periodic, composite trapezoidal rule is employed, i.e., choose a large enough $N$ and use the approximation
	\begin{equation}
		\langle f(x,\cdot) \rangle_t \approx \frac{1}{N} \sum_{i=0}^{N-1} f(x,t_0+iT/N).
		\label{eq_numAveragingPeriodic}
	\end{equation}
	In theory, $t_0$ can be any value as $N\rightarrow \infty$. In practice, when $\langle f(x,\cdot) \rangle_t$ is used in the numerical simulation of an averaged system, we simply choose $t_0$ to be the time corresponding to the current timestep/stage. This choice is because, in the special case of $N=1$, the simulation of the transformed system \eqref{eq_oldUnaveraged} will degenerate to a subset of the popular exponential integrators for the untransformed system \eqref{eq_canonicalSystem}. (See \cite{hochbruck2010exponential} for a review of exponential integrators, \cite{hochbruck1998exponential, cox2002exponential, kassam2005fourth} for an additional but far-from-complete list of contributions with additional references within, \cite{Grubmuller:91, Tuckerman:92, wisdom1991symplectic, gautschi1961numerical, deuflhard1979study, Skeel:99, hochbruck1999gautschi, hairer2000long} for closely related popular methods, and e.g., \cite{moler2003nineteen,saad1992analysis,hochbruck1997krylov} for the computations of matrix exponential and \cite{SIM2} for exponentiating a sequence of matrices.)
	
	\quad Given periodicity, \eqref{eq_numAveragingPeriodic} is a very good approximation as long as $f$ is also smooth in time, because under this condition composite trapezoidal rule converges faster than any power of $N$ (e.g., \cite{atkinson2008introduction}).
\item
	When $f(x,\cdot)$ is only quasiperiodic, the method of weighted Birkhoff averaging \cite{das2017quantitative} is employed instead, i.e., fix an arbitrary $\delta t$ that can resolve all frequencies, choose a large enough $N$ independent of $\delta t$, and use the approximation
	\begin{equation}
		\langle f(x,\cdot) \rangle_t \approx \frac{\sum_{i=0}^{N-1} w_i f(x,t_0+i\delta t)}{\sum_{i=0}^{N-1} w_i}
		\label{eq_numAveragingQuasiperiodic}
	\end{equation}
	\begin{equation}
		\text{where}\quad w_i=\exp\left(-\frac{1}{t(1-t)}\right) \quad\text{with}\quad t=\frac{i+1}{N+1}.
		\label{eq_weightsBirkhoff}
	\end{equation}
	\eqref{eq_numAveragingQuasiperiodic} is a good approximation because it was shown in \cite{das2017quantitative} that, provided smoothness of $f$, the convergence speed is also faster than any power of $N$, unless the rotation vector of the quasiperiodicity (or equivalently, the frequencies $\omega_1,\cdots,\omega_d$ and $\nu_1,\cdots,\nu_n$) is from a measure-zero set\footnote{When the rotation vector is not of Diophantine class $\beta$ for any $\beta>0$, the convergence speed slows down to polynomial in $N$; see \cite{das2017quantitative} for more details.}.
	
	\quad Note that no knowledge about the rotation number is needed, although its value affects the accuracy of \eqref{eq_numAveragingQuasiperiodic} when a finite value of $N$ is chosen. Great methods for estimating the rotation number from a finite time trajectory exist, such as \cite{laskar1993frequency1, luque2014quasi}, but such an estimation is not necessary here.
\item
	When $f(x,\cdot)$ is periodic with unknown period, we still use the weighted Birkhoff average \eqref{eq_numAveragingQuasiperiodic} as an approximation.

\item
	How to choose a large enough $N$, i.e., the number of samples, to well approximate the time-average? An automated two-step procedure can be used: (i) at the initialization stage, numerically average $f(x,t)=\exp(\Omega t)$ using $N$ samples, monitor its convergence by checking how much \eqref{eq_numAveragingQuasiperiodic} deviates from the true average, which is known to be zero, and then tune $N$ accordingly until a desired accuracy is accomplished; (ii) during the numerical simulation of the averaged system, when a different $f(x,t)$ is being time-averaged (e.g., $\mathcal{F}$ in \eqref{eq_curlyF}), start with the $N$ value found in step (i), and keep on increasing it until the change in the averaged value is small enough. A remark is that for the special case where $F(X,t)$ in \eqref{eq_canonicalSystem} is independent of $t$, we found step (ii) unnecessary in our experiments.
\end{itemize}
Important to remark is, the powerful and versatile idea of introducing weights for improved numerical averaging is not new in this article. Its variations have been used in multiple contexts, serving as an important component of various successful numerical methods, including Heterogeneous Multiscale Methods (e.g., \cite{HMMoriginal,MR2165382,Eric07HMMlike,MR2314852,Ariel:08,Seamless09,calvo2010heterogeneous,abdulle2012heterogeneous,brumm2013heterogeneous,ariel2013multiscale}), mollified impulse methods (e.g., \cite{Skeel:99, izaguirre1999longer, Sanz-Serna:08}), and frequency map analysis (e.g., \cite{laskar1993frequency1, laskar1993frequency2, laskar1999introduction}). In fact, numerically constructing the averaged system \eqref{eq_YbarRHS} or \eqref{eq_ZbarRHS} and then simulating it with a larger timestep is a method that can be viewed as a variation of Heterogeneous Multiscale Methods, with additional help from FLow AVeraging integratORs \cite{FLAVOR10,FLAVORPDE11}, because this method first flows vector fields and then averages them.

How computationally-efficient will be the simulation of numerically-averaged system? The averaging per se is costly as it is performed at each time step. However, to simulate the system till $\mathcal{O}(\epsilon^{-1})$ time, one can use an $o(\epsilon^{-1})$-sized timestep because the averaged system is at the $\mathcal{O}(\epsilon^{-1})$ timescale. This way, the overall computational cost scales linearly with $N$ but remains independent of $\epsilon$. Therefore, the method is more computationally advantageous when $\epsilon$ becomes smaller. Sections \ref{sec_FPU} and \ref{sec_PDE} provide quantifications of such an advantage, and more details on numerical averaging will be discussed in a separate paper.

\section{Demonstration 1: Capacitive Parametric Ultrasonic Transducers}
\label{sec_CPUT}

\subsection{The model}
Recently a new engineering device for wireless energy transfer called Capacitive Parametric Ultrasound Transducer (CPUT) has been designed. It is based on parametric resonance in an RLC circuit, where the capacitance of the circuit is mechanically modulated by ultrasound so that mechanical energy was injected into the circuit and converted to electrical energy \cite{surappa2017capacitive, CPUT2018IEEE}. One of its goals is to utilize the low attenuation of ultrasound in vivo to safely and efficiently deliver power to implanted medical devices. Such a system was modeled in \cite{CPUT2018IEEE} as two oscillators coupled via nonlinearity and perturbed in time, and this model will be analyzed here to provide design guidance and performance quantification.

The model corresponds to the system
\begin{equation} \begin{dcases}
	\frac{d^2 V}{ds^2} + \frac{R}{L} \frac{dV}{ds} + \frac{d-x}{LA\varepsilon_0} V = 0 \\
	m \frac{d^2 x}{ds^2} + b \frac{dx}{ds} + k x = F_0 \sin(2\omega_0 s) + \frac{\varepsilon_0 A}{2} \frac{V^2}{(d-x)^2},
\end{dcases} 
\label{eq_CPUTwithUnits}
\end{equation}
where $V(s)$ and $x(s)$ respectively correspond to the voltage across the capacitor and the displacement of the parallel plates of the capacitor at time $s$. The external ultrasound exerts a periodic force on the membrane, which constitutes as a plate of the capacitor.

The intuition is, when the ultrasound frequency of $2\omega_0$ matches the intrinsic frequency of the mechanical component (i.e., the $x$ equation in \eqref{eq_CPUTwithUnits} without the nonlinear term on the right hand side) and twice the intrinsic frequency of the electric component (i.e. $V$ again without nonlinearity), the term $xV$ makes the first equation resemble Mathieu's equation \cite{MathieuEquation}, which exhibits parametric resonance (see e.g., the review of \cite{verhulst2009perturbation}) and thus increased energy when $R/L$ is small enough. However, the true dynamics of \eqref{eq_CPUTwithUnits} is more complex than this intuition; for example, physically speaking $x$ should not exceed $d$ as the plate displacement cannot be greater than the initial double-plate distance, and this is mathematically reflected in the fact that the nonlinearity will prevent an unbounded growth of the solution. Some questions are, does the energy growth saturate, and how much output can one get? A quantitative analysis of the effect of the nonlinearity is therefore desired.

To do so, we first identify a small parameter by normalizing numerical values of the parameters: introduce scaling factors $c$ and $\mu$ (for design parameters used in \cite{CPUT2018IEEE}, $c=10^7$ and $\mu=10^8$), rescale time by letting $t=c s$, and rescale the plate displacement by letting $y=\mu x$. Simplify notations by letting:
\[
\epsilon=\frac{\mu^3 \varepsilon_0 A}{2m c^2}, ~
\gamma=\frac{R}{Lc\epsilon}, ~
\alpha=\frac{1}{L A \varepsilon_0 c^2 \mu \epsilon}, ~
\beta=\frac{b}{m c \epsilon}, ~
F=\frac{\mu F_0}{m c^2 \epsilon}, ~
D=\mu d, ~
\omega=\frac{\omega_0}{c}.
\]
Then, under the assumption that parameters sufficiently satisfy matching conditions, namely $\frac{d}{LA\varepsilon_0 c^2}-\frac{\omega_0^2}{c^2}=\mathcal{O}(\epsilon^2)$ and $\frac{k}{m c^2}-\frac{4\omega_0^2}{c^2}=\mathcal{O}(\epsilon^2)$, the governing equations rewrite as:
\begin{equation} \begin{dcases}
	\ddot{V} + \epsilon \gamma \dot{V} + \big(\omega^2+\mathcal{O}(\epsilon^2)\big)V = \epsilon \alpha y V , \\
	\ddot{y} + \epsilon \beta \dot{y} + \big((2\omega)^2+\mathcal{O}(\epsilon^2)\big)y = \epsilon F \sin(2\omega t) + \epsilon \frac{V^2}{(D-y)^2} .
\end{dcases} 
\label{eq_CPUToriginal}
\end{equation}
This way, averaging can be used to approximate \eqref{eq_CPUToriginal} for a better understanding of its behavior (see Section \ref{sec_classicalAveraging}). We refer to \cite{nayfeh2008perturbation, nayfeh2011method, verhulst2009perturbation, DoVe08, verhulst2002parametric, fatimah2002bifurcations, verhulst2005autoparametric, zounes2002subharmonic, KoOw13} for an additional but incomplete list of similar analyses / applications. Besides averaging, many other powerful methods exist, such as the method of multiple scales (e.g., \cite{kevorkian2012multiple, rand1980bifurcation, rand2012lecture}), Poincar\'e-Lindstedt (e.g., \cite{nayfeh2008perturbation, Verhulst:96}), and harmonic balance (e.g., \cite{nakhla1976piecewise}; note it can be viewed as a Galerkin method); however, it is nontrivial to adapt them to our problem. This is because, although the two oscillators' original intrinsic frequencies are 1:2, the nonlinearity may shift their frequencies differently such that no single time-transformation is sufficient for both oscillators any more. Therefore, we will follow the more convenient averaging approach, with an objective of again providing a simple improvement of its accuracy. For the relationship between averaging and the method of multiple scales, please see, e.g., \cite{perko1969higher}.

\subsection{The case of exactly resonant excitation}
\label{sec_CPUT_fixedPtExactlyResonant}
\paragraph{New averaged system.} If one is interested in an $\mathcal{O}(\epsilon)$ approximation of the solution till $\mathcal{O}(\epsilon^{-1})$ time, the $\mathcal{O}(\epsilon^2)$ terms in \eqref{eq_CPUToriginal} are unimportant, and the governing equations can be rewritten as
\begin{equation}
\begin{dcases}
	\dot{V}=U \\
	\dot{U}=-\omega^2 V - \epsilon \gamma U + \epsilon \alpha y V \\
	\dot{y}=z \\
	\dot{z}=-(2\omega)^2 y - \epsilon\beta z+\epsilon F\sin(2\omega t)+\epsilon\frac{V^2}{(D-y)^2}
\end{dcases}. \label{eq_CPUT_EOMexactFrequency}
\end{equation}
Viewing $X=[V,U,y,z]$, this is in the form of \eqref{eq_canonicalSystem}. However, an analytical expression for the average of $\epsilon\frac{V^2}{(D-y)^2}$ is not obtainable, and we thus make a reasonable assumption that the distance variation between the capacitor's double plates is much smaller than the distance itself (i.e., $|y| \ll D$), which permits a 1st-order Taylor approximation of the nonlinearity: $\epsilon\frac{V^2}{(D-y)^2} \approx \epsilon\left(\frac{V^2}{D^2}+\frac{2V^2 y}{D^3}\right)$.

To characterize the $\mathcal{O}(\epsilon^{-1})$ dynamics of \eqref{eq_CPUT_EOMexactFrequency}, we resort to the proposed coordinate transform $X\mapsto Z$ \eqref{eq_transformNew}. However, since oscillation amplitudes will be the quantities of interest for physical reasons (e.g., $V$ amplitude is the AC voltage), we'll further transform $Z$ to polar coordinates. Overall, we will use a coordinate transform from $[V,U,y,z]$ to $[\rho,\phi,r,\theta]$ via
\begin{equation}
\begin{dcases}
	V=\rho\cos(\omega(t+\phi)) \\
	U=-\omega\rho\sin(\omega(t+\phi)) \\
	y=r\cos(2\omega(t+\theta))+\frac{\epsilon\rho^2}{2D^2(2\omega)^2} \\
	z=-2\omega r\sin(2\omega(t+\theta))
\end{dcases}
\label{eq_CPUT_coordTransf}
\end{equation}
where the $\mathcal{O}(\epsilon)$ term is due to the local average $\epsilon P(Z(t))$ in \eqref{eq_transformNew} and crucial for an improved accuracy of averaging.

Such a coordinate transformation changes the dynamics according to
\begin{equation}
\begin{dcases}
	\dot{\rho}=\frac{\omega^2 V \dot{V}+U \dot{U}}{\omega^2 \rho} \\
	\dot{\phi}=\frac{\dot{V}U-\dot{U}V}{\omega^2 \rho^2}-1 \\
	\dot{r}=\frac{(2\omega)^2 \left(y-\frac{\epsilon \rho^2}{2D^2(2\omega)^2}\right) \left(\dot{y}-\frac{\epsilon \rho\dot{\rho}}{D^2(2\omega)^2}\right)+z \dot{z}}{(2\omega)^2 r} \\
	\dot{\theta}=\frac{\left(\dot{y}-\frac{\epsilon \rho \dot{\rho}}{D^2(2\omega)^2}\right)z-\dot{z}\left(y-\frac{\epsilon \rho^2}{2D^2(2\omega)^2}\right)}{(2\omega)^2 r^2}-1
\end{dcases}. 
\end{equation} 
Expressing the right hand sides in terms of $\rho,\phi,r,\theta$, it is easy to see they are $2\pi/(2\omega)$ or $2\pi/\omega$ periodic functions in $t$. In addition, $\rho,\phi,r,\theta$ are all slow variables because their time derivatives are $\mathcal{O}(\epsilon)$, and averaging approximation is applicable. For a simplification, note the $-\frac{\epsilon \rho\dot{\rho}}{D^2(2\omega)^2}$ terms are $\mathcal{O}(\epsilon^2)$, and thus the 1st-order averaged system is
\begin{equation}
\begin{dcases}
	\dot{\rho}=\left\langle \frac{\omega^2 V \dot{V}+U \dot{U}}{\omega^2 \rho} \right\rangle_t \\
	\dot{\phi}=\left\langle \frac{\dot{V}U-\dot{U}V}{\omega^2 \rho^2}-1 \right\rangle_t \\
	\dot{r}=\left\langle \frac{(2\omega)^2 \left(y-\frac{\epsilon \rho^2}{2D^2(2\omega)^2}\right) \dot{y} +z \dot{z}}{(2\omega)^2 r} \right\rangle_t \\
	\dot{\theta}=\left\langle \frac{\dot{y}z-\dot{z}\left(y-\frac{\epsilon \rho^2}{2D^2(2\omega)^2}\right)}{(2\omega)^2 r^2}-1 \right\rangle_t
\end{dcases} \label{eqn_CPUT_polarEOMavgGeneric} 
\end{equation} 
where \eqref{eq_CPUT_coordTransf} has to be substituted into \eqref{eq_CPUT_EOMexactFrequency}, and then both \eqref{eq_CPUT_coordTransf}, \eqref{eq_CPUT_EOMexactFrequency} plugged into \eqref{eqn_CPUT_polarEOMavgGeneric}, before time averaging over the period of $2\pi/\omega$ is performed. After some algebra, the averaged system \eqref{eqn_CPUT_polarEOMavgGeneric} is computed to be
\begin{equation}
\begin{dcases}
	\dot{\rho}= \frac{\epsilon}{4\omega}\rho \Big( -2\gamma\omega + r\alpha\sin\big(2(\theta-\phi)\omega\big) \Big) \\
	\dot{\phi}= -\frac{\epsilon\alpha}{16 D^2 \omega^4} \Big( \epsilon\rho^2 + 4D^2 r \omega^2 \cos\big(2(\theta-\phi)\omega\big) \Big) \\
	\dot{r}= -\frac{\epsilon}{32 D^5 \omega^3} \Big( 8D^5\omega^2 \big(2r\beta\omega + F\cos(2\theta\omega)\big) + \rho^2(\epsilon \rho^2+4D^3\omega^2)\sin\big(2(\theta-\phi)\omega\big) \Big) \\
	\dot{\theta}= -\frac{\epsilon}{64 D^5 \omega^4} \frac{1}{r} \Big( \rho^2(\epsilon \rho^2+4D^3\omega^2)\cos\big(2(\theta-\phi)\omega\big) + 8D^2\omega^2\big( r\rho^2-D^3 F \sin(2\theta\omega)\big) \Big)
\end{dcases} \label{eqn_CPUT_polarEOMavg} 
\end{equation}

\paragraph{Fixed point of the averaged system \eqref{eqn_CPUT_polarEOMavg}.}
The fixed point of \eqref{eqn_CPUT_polarEOMavg} is a real solution of
\begin{align}
	0&= -2\gamma\omega + r\alpha\sin\big(2(\theta-\phi)\omega\big) \label{eq_CPUT_fixedPt1} \\
	0&= \epsilon\rho^2 + 4D^2 r \omega^2 \cos\big(2(\theta-\phi)\omega\big) \label{eq_CPUT_fixedPt2} \\
	0&= 8D^5\omega^2 \big(2r\beta\omega + F\cos(2\theta\omega)\big) + \rho^2(\epsilon \rho^2+4D^3\omega^2)\sin\big(2(\theta-\phi)\omega\big) \label{eq_CPUT_fixedPt3} \\
	0&= \rho^2(\epsilon \rho^2+4D^3\omega^2)\cos\big(2(\theta-\phi)\omega\big) + 8D^2\omega^2\big( r\rho^2-D^3 F \sin(2\theta\omega)\big) \label{eq_CPUT_fixedPt4}
\end{align}
We now solve this transcendental algebraic system under the intrinsic constraint of $\rho\geq 0$, $r\geq 0$:
let $\sigma=\rho^2$ and note \eqref{eq_CPUT_fixedPt1} and \eqref{eq_CPUT_fixedPt2} give
\begin{align}
	& \sin\big(2(\theta-\phi)\omega\big) = \frac{2\gamma\omega}{r\alpha}, \label{eq_CPUT_Sm}  \\
	& \cos\big(2(\theta-\phi)\omega\big) = -\frac{\epsilon\sigma}{4D^2 \omega^2 r}. \label{eq_CPUT_Cm}
\end{align}
Their sum of squares being 1 leads to
\[
	r=\sqrt{\left(\frac{2\gamma\omega}{\alpha}\right)^2+\left(\frac{\epsilon\sigma}{4D^2 \omega^2}\right)^2}.
\]
Similarly, \eqref{eq_CPUT_fixedPt3} and \eqref{eq_CPUT_fixedPt4} give
\begin{align}
	& \sin\big(2\theta\omega\big) = \frac{(4 D^3 \sigma  \omega ^2+ \sigma ^2 \epsilon)\cos\big(2(\theta-\phi)\omega\big) +8 D^2 r \sigma  \omega ^2}{8 D^5 F \omega ^2}, \\
	& \cos\big(2\theta\omega\big) = \frac{-(4 D^3 \sigma  \omega ^2+\sigma ^2 \epsilon)\sin\big(2(\theta-\phi)\omega\big) -16 \beta  D^5 r \omega ^3}{8 D^5 F \omega ^2}.
\end{align}
Upon substituting \eqref{eq_CPUT_Sm} and \eqref{eq_CPUT_Cm}, $\sin^2(2\theta\omega)+\cos^2(2\theta\omega)=1$ leads to
\begin{align*}
	& \frac{1}{64D^{10} F^2\alpha^2\omega^4} (A_0+A_1\sigma+A_2\sigma^2+A_3\sigma^3+A_4\sigma^4)=0, \\
	& \text{where}~ \begin{dcases}
	  A_0=1024 \beta ^2 \gamma ^2 D^{10} \omega ^8-64 \alpha ^2 D^{10} F^2 \omega ^4 \\
	  A_1=256 \alpha  \beta  \gamma  D^8 \omega ^6 \\
	  A_2=16 \alpha ^2 \beta ^2 D^6 \omega ^2 \epsilon ^2+16 \alpha ^2 D^6 \omega ^4+64 \alpha  \beta  \gamma  D^5 \omega ^4 \epsilon +256 \gamma ^2 D^4 \omega ^6 \\
	  A_3=-8 \alpha ^2 D^3 \omega ^2 \epsilon \\
	  A_4=\alpha ^2 \epsilon ^2
	\end{dcases}
\end{align*}
This degree-four polynomial can be analytically solved; however, the solution expression is too lengthy to provide insights about its dependence on the parameters, thus not helpful for designing the CPUT system. Therefore, we will use singular perturbation method (e.g., \cite{kevorkian2012multiple}) to approximate its solutions instead.

More specifically, because of the scaling of $A_i$'s, let $B_0=A_0$, $B_1=A_1$, $B_2=A_2$, $\epsilon B_3=A_3$, $\epsilon^2 B_4=A_4$, and then the polynomial to solve becomes
\[
	B_0+B_1\sigma+B_2\sigma^2+\epsilon B_3\sigma^3+\epsilon^2 B_4\sigma^4=0,
\]
We are only interested in its non-negative real solution(s).

There can only be four solutions in total. First assuming $\sigma=\mathcal{O}(1)$ (the regular perturbation regime), one obtains
\begin{equation}
	B_0+B_1\sigma+B_2\sigma^2 \approx 0,
	\label{eq_CPUT_rootsRegular}
\end{equation}
whose roots will give two approximate solutions. Then assuming $\sigma=\mathcal{O}(\epsilon^{-1})$ (a singular perturbation regime) and letting $\sigma=\epsilon^{-1} s$, one can collect leading terms and obtain
\begin{equation}
	B_2 + B_3 s + B_4 s^2 \approx 0,
	\label{eq_CPUT_rootsSingular}
\end{equation}
whose roots will give two additional approximate solutions.

Roots of \eqref{eq_CPUT_rootsSingular} are complex because $B_3^2-4B_2B_4=-64D^2\alpha^2\omega^2((D\alpha\beta\epsilon+2\gamma\omega^2)^2+12\gamma^2\omega^4) < 0$. They are therefore irrelevant.

On the other hand, solving \eqref{eq_CPUT_rootsRegular} leads to (keeping the $\sigma\geq 0$ solution only):
\begin{align}
	& \sigma \approx \frac{-8 D^4 \alpha  \beta  \gamma  \omega ^4 + 2 D^3\omega\sqrt{N}}{D^2 \alpha ^2 \omega^2 + 16 \gamma ^2 \omega ^4 + 4 \alpha  \beta  \gamma  D \omega ^2 \epsilon + \alpha ^2 D^2 \beta ^2 \epsilon ^2}, \label{eq_CPUT_improvedFixedPtCore}  \\
	& \text{where}\quad N = D^2 F^2 \alpha^4 \omega^2 + 16 F^2 \alpha^2 \gamma ^2 \omega ^4 - 256 \beta ^2 \gamma ^4 \omega ^8 \\
	& \qquad\qquad + 4 \alpha  \beta  \gamma  D \omega ^2 \left(\alpha ^2 F^2-16 \beta ^2 \gamma ^2 \omega ^4\right) \epsilon 
	+ (D^2 F^2 \alpha ^4 \beta ^2 - 16 D^2 \alpha ^2 \beta ^4 \gamma ^2 \omega ^4) \epsilon ^2. \nonumber
\end{align}
The corresponding $\rho$ and $r$ are:
\begin{equation} \bm{
	\rho = \sqrt{\sigma}, \qquad
	r = \sqrt{\left(\frac{2\gamma\omega}{\alpha}\right)^2+\left(\frac{\epsilon \rho^2}{4 D^2 w^2}\right)^2},
	}\label{eq_CPUT_improvedFixedPt}
\end{equation}
which correspond to	steady-state oscillation amplitudes of $V$ and $y$.

\begin{Remark}
	When necessary, one can simplify \eqref{eq_CPUT_improvedFixedPtCore} to leading order as
	\[
		\sigma \approx \frac{-8 D^4 \alpha \beta \gamma \omega^2 + 2D^3\sqrt{D^2 F^2 \alpha^4+16F^2\alpha^2\gamma^2\omega^2-256\beta^2\gamma^4\omega^6}}{D^2\alpha^2+16\gamma^2\omega^2}.
	\]
	Experimentalists may find this expression easier to use when tuning the system parameters for a desired output voltage; this approximation is less accurate though.
\end{Remark}

\paragraph{Remarks on dynamics of the averaged system \eqref{eqn_CPUT_polarEOMavg}.}
At least for the parameters used in \cite{CPUT2018IEEE}, the fixed point identified above can be verified by linearization to be asymptotic stable (i.e., a sink), and hence it corresponds to a steady state of the CPUT. 

However, although the system in its physical domain ($\rho\geq 0$, $r\geq 0$, $\phi\in\mathbb{T}$, $\theta\in \mathbb{T}$) contains only one sink, it is unclear whether there are higher-order attractors such as limit cycle or even chaotic attractor (note the system is more than 2 dimensional and thus Poincar\'e-Bendixson theorem does not apply). Therefore, the possibility that some initial condition not converging to the identified steady state has not been theoretically ruled out.

To better understand whether the sink is the global attractor, we performed a brute force numerical search in the 4-dimensional state-space. Initial conditions were sampled from regular grid points and after sufficient long time they all converged to the identified sink. This suggests that, if there were any attractor in the averaged system \ref{eqn_CPUT_polarEOMavg} other than the sink, its geometrical shape must be rather irregular because it avoided all the sampled grid points.

More specifically, for parameters
\begin{align*}
	& \epsilon = 0.069908094621482, ~
	D = 12, ~
	\omega = 0.628318530717959, ~
	F = 4.517732098486560, \\
	& \alpha = 0.471019510657106, ~
	\beta = 3.388299073864920, ~
	\gamma = 0.208520337367901,
\end{align*}
we used the Georgia Tech high performance computing cluster (PACE) to simulate 71,680,000 initial conditions $[r,\rho,\phi,\theta](0) \in [0.01:0.01:2]\times[0.1:0.1:35]\times[0:0.2:2\pi)\times[0:0.2:2\pi)$, i.e., drawn from a uniform grid. For each initial condition, the averaged system is numerically integrated by 4th-order Runge-Kutta with $h=0.01$ till $T=450$, and the 2-norm of the vector field at the last step as well as the final 2-distance to the fixed point [24.442613175475309;   1.077007670858842;   0.585849324582913;  -2.419228080303699] were recorded. The maxima of these two quantities over all initial conditions are, respectively, $5.6650\times 10^{-14}$ and $1.2397\times 10^{-10}$, suggesting convergence to the fixed point in all cases.

Finally, it is important to recall that attractors of the averaged system may not have correspondences in the original system \eqref{eq_CPUT_EOMexactFrequency}, and vice versa. In fact, the averaged system may be an accurate approximation only till $\mathcal{O}(\epsilon^{-1})$. Infinite time approximations of dynamical structures trace back to at least KAM theory (e.g., \cite{arnold1997mathematical, moser1973stable, poschel1982integrability} and references therein) and still remain as a research frontier.

\paragraph{Numerical demonstration.} One simplest difference produced by the classical averaging (Section \ref{sec_classicalAveraging}) and the improved one (Section \ref{sec_improvedMethod}) for this problem \eqref{eq_CPUT_EOMexactFrequency} is, whether the mean of $y$ oscillations at the steady state is zero.

More specifically, one can perform the classical averaging by first using the coordinate transform from $[V,U,y,z]$ to $[\hat{\rho},\hat{\phi},\hat{r},\hat{\theta}]$ given by
\[
\begin{dcases}
	V=\hat{\rho}\cos(\omega(t+\hat{\phi})) \\
	U=-\omega\hat{\rho} \sin(\omega(t+\hat{\phi})) \\
	y=\hat{r}\cos(2\omega(t+\hat{\theta})) \\
	z=-2\omega \hat{r}\sin(2\omega(t+\hat{\theta}))
\end{dcases},
\]
and then averaging the $\hat{\rho},\hat{\phi},\hat{r},\hat{\theta}$ system and analyzing its fixed point (details omitted). However, that fixed point will correspond to a steady-state where $y$ oscillates with amplitude $\hat{r}$ about a mean of zero. This should be contrasted with the prediction of improved averaging, which, according to \eqref{eq_CPUT_coordTransf}, has its steady-state in which $y$ oscillates with amplitude $r$ about a mean of $\epsilon \rho^2 / (8D^2\omega^2)$.

\begin{figure}[h]
\centering
\footnotesize
\subfigure[Benchmark solution of \eqref{eq_CPUT_EOMexactFrequency} obtained by fine numerical simulation. Green line corresponds to $y=0$ and shows the asymmetry of the $y$ trajectory.
]{
\includegraphics[width=0.48\textwidth]{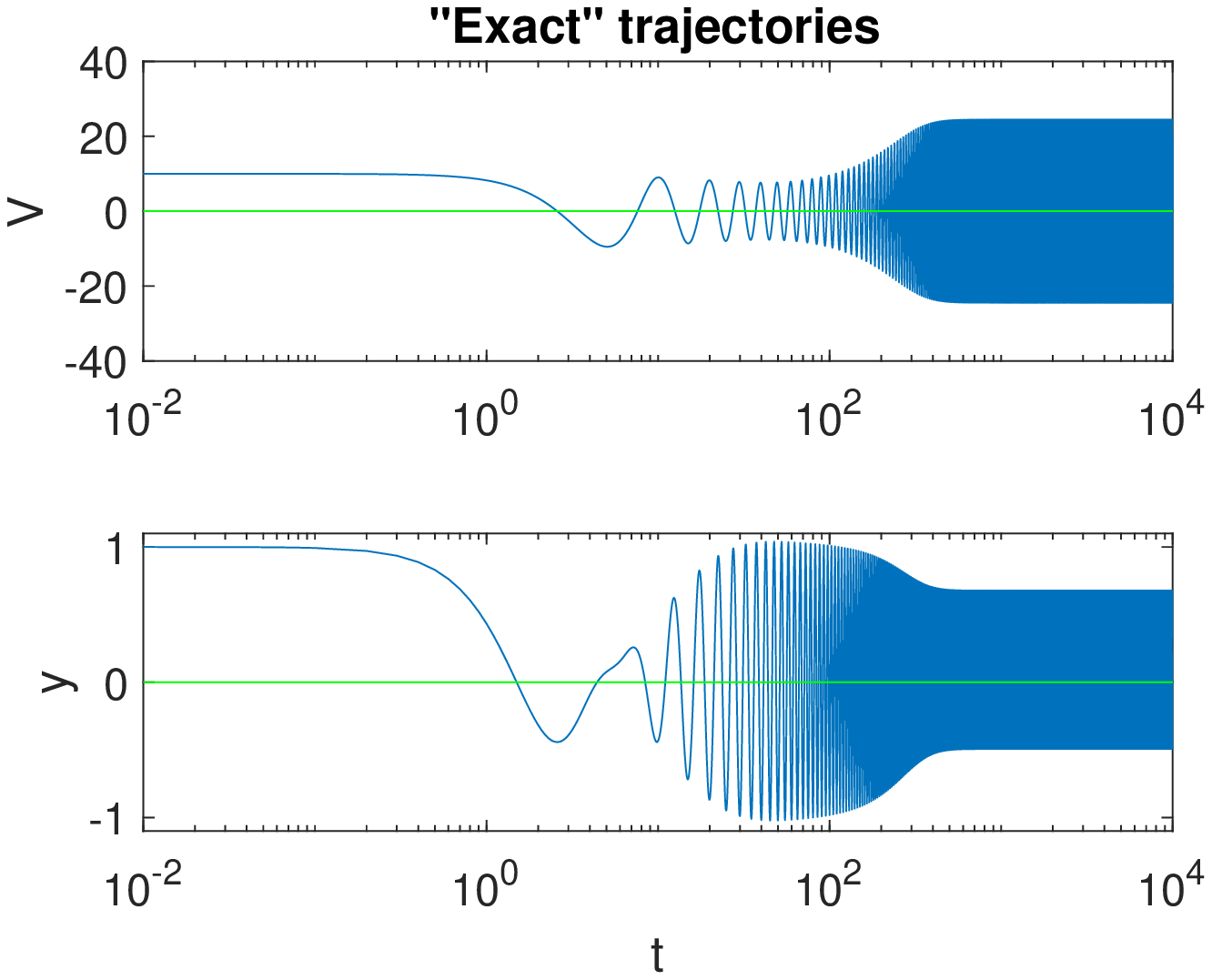}
\label{fig_CPUT_ExactTrajectories}
}
\subfigure[Deviations of trajectories obtained by fine numerical simulations of the classically averaged system and the improved averaged system \eqref{eqn_CPUT_polarEOMavg} from the benchmark.]{
\includegraphics[width=0.48\textwidth]{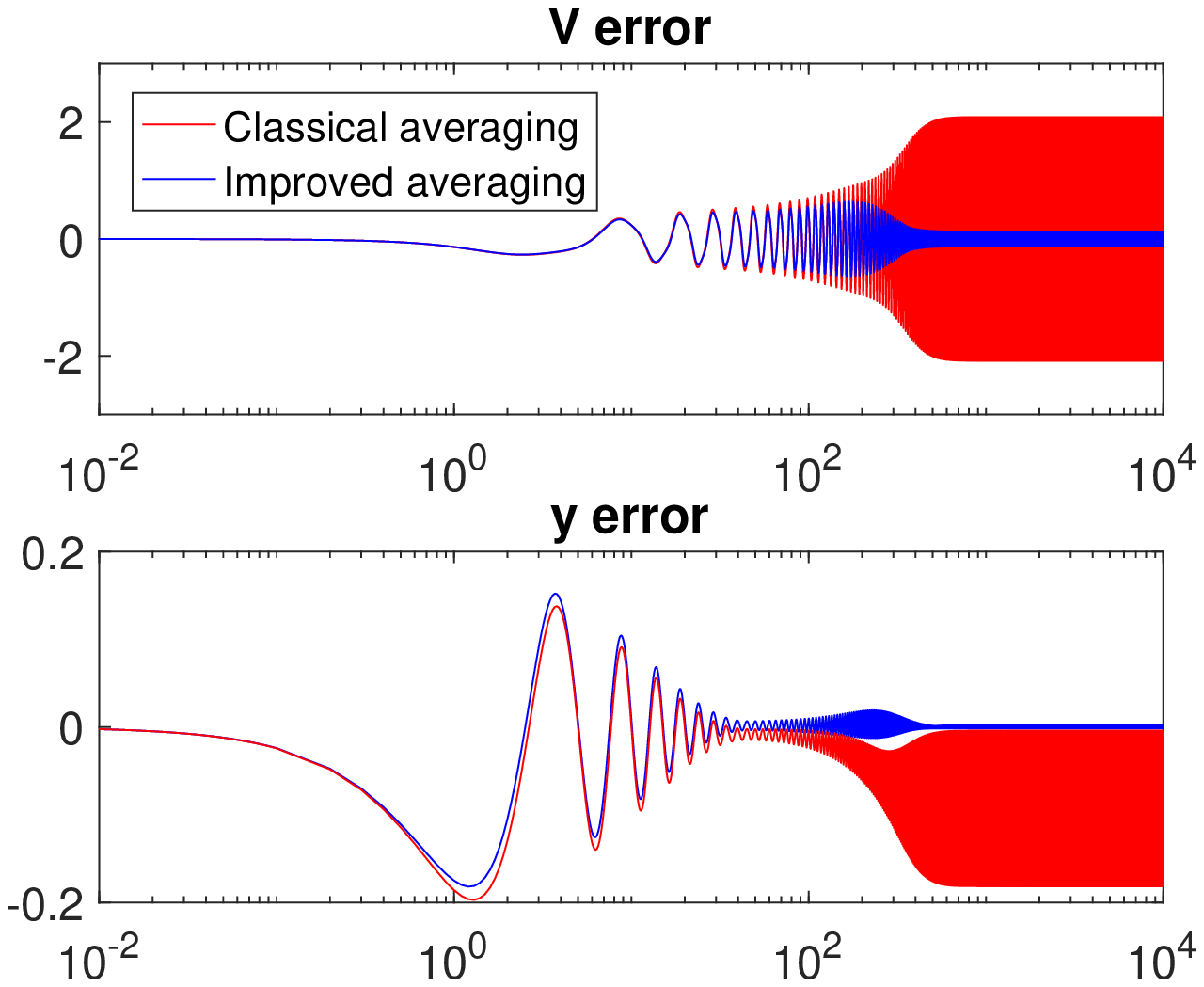}
\label{fig_CPUT_numericalErrors}
}
\caption{\footnotesize Performances of classical and improved averages for CPUT. {$[V,\dot{V},y,\dot{y}](0)=[10,0,1,0]$.} Note the time axis is in log scale for showing both the nontrivial transient and the steady state.}
\end{figure}

\begin{table}[h]
  \begin{tabular}{r | c | c | c | c}
  	& $V$ amplitude & $V$ mean & $y$ amplitude & $y$ mean \\
  	\hline \hline
	Truth (fine numerical solution) &
	$\approx 24.4409$ & $\approx 0.0000$ & $\approx 0.58481$ & $\approx 0.091952$ \\
	Classical averaging prediction &
	$\approx 24.\textcolor{red}{8527}$ & $0$ & $\approx 0.5\textcolor{red}{5631}$ & $=0.0\textcolor{red}{00000}$ \\
	Improved averaging prediction A &
	$\approx 24.44\textcolor{red}{26}$ & $0$ & $\approx 0.585\textcolor{red}{85}$ & $\approx 0.091\textcolor{blue}{8}\textcolor{red}{35}$ \\
	Improved averaging prediction B &
	$\approx 24.\textcolor{blue}{3}\textcolor{red}{626}$ & $0$ & $\approx 0.585\textcolor{red}{47}$ & $\approx 0.09\textcolor{red}{1235}$
  \end{tabular}
  \caption{\footnotesize CPUT's steady-state oscillation parameters. Improved averaging prediction A corresponds to the numerical solution of fixed point equations \eqref{eq_CPUT_fixedPt1}-\eqref{eq_CPUT_fixedPt4}. Improved averaging prediction B corresponds to the analytically approximated fixed point \eqref{eq_CPUT_improvedFixedPtCore}-\eqref{eq_CPUT_improvedFixedPt}.}
  \label{tab_CPUT}
\end{table}

Figure \ref{fig_CPUT_ExactTrajectories} shows how the solution of the original problem \eqref{eq_CPUT_EOMexactFrequency} transits into the steady-state of harmonic oscillations. Important to note is $y$ oscillates around a nonzero mean. The proposed approach well captures this, but the classical averaging does not (see Table \ref{tab_CPUT}). This improved accuracy can be quantitatively seen from the significantly reduced error of the former (Figure \ref{fig_CPUT_numericalErrors}).

\subsection{The case of inexact resonant excitation: the dependence on the excitation frequency and amplitude}
We now quantify how strong the external excitation needs to be and to what extent the perturbation frequency can deviate from $2\omega$, in order for the system to maintain its nonzero steady-state (which means a steady output of energy). To do so, consider, instead of \eqref{eq_CPUT_EOMexactFrequency}, the following:
\[ \begin{dcases}
	\dot{V}=U \\
	\dot{U}=-\omega^2 V - \epsilon \gamma U + \epsilon \alpha y V \\
	\dot{y}=z \\
	\dot{z}=-(2\omega)^2 y - \epsilon\beta z+\epsilon F\sin(2(1\textcolor{blue}{+\epsilon\Delta}) \omega t)+\epsilon\frac{V^2}{(D-y)^2}
\end{dcases}, \]
where $\Delta$ quantifies the deviation of frequency. To analyze the system, introduce a new time $\tau=(1+\epsilon \Delta)t$ and denote $d/d\tau$ by prime. Then
\[	\begin{dcases}
	(1+\epsilon\Delta)V'=U \\
	(1+\epsilon\Delta)U'=-\omega^2 V - \epsilon \gamma U + \epsilon \alpha y V \\
	(1+\epsilon\Delta)y'=z \\
	(1+\epsilon\Delta)z'=-(2\omega)^2 y - \epsilon\beta z+\epsilon F\sin(2\omega \tau)+\epsilon\frac{V^2}{(D-y)^2}
\end{dcases}. \]
Discard $o(\epsilon)$ terms, because similar to before they will not affect an $\mathcal{O}(\epsilon)$ approximation till at least $\tau=\mathcal{O}(\epsilon^{-1})$. We then obtain
\begin{equation}
\begin{dcases}
	V'=U - \epsilon\Delta U \\
	U'=-\omega^2 V + \epsilon \Delta \omega^2 V - \epsilon \gamma U + \epsilon \alpha y V \\
	y'=z - \epsilon\Delta z \\
	z'=-(2\omega)^2 y + \epsilon\Delta(2\omega)^2 y - \epsilon\beta z+\epsilon F\sin(2\omega \tau)+\epsilon\frac{V^2}{(D-y)^2}
\end{dcases}. \label{eq_CPUT_EOMresonantTongue}
\end{equation} 
The same coordinate transformation (\eqref{eq_CPUT_coordTransf}, with $t$ replaced by $\tau$), the same approximation of $\epsilon\frac{V^2}{(D-y)^2} \approx \epsilon\left(\frac{V^2}{D^2}+\frac{2V^2 y}{D^3}\right)$, and the same omission of $-\frac{\epsilon \rho\dot{\rho}}{D^2(2\omega)^2}$ lead to a system that averages (over $\tau$) to
\begin{equation}
\begin{dcases}
	\rho'= \frac{\epsilon}{4\omega}\rho \Big( -2\gamma\omega + r\alpha\sin\big(2(\theta-\phi)\omega\big) \Big) \\
	\phi'= -\frac{\epsilon}{16 D^2 \omega^4} \Big( \epsilon \alpha \rho^2 {\textcolor{blue}{+16 D^2\omega^4\Delta}} + 4D^2 \alpha r \omega^2 \cos\big(2(\theta-\phi)\omega\big) \Big) \\
	r'= -\frac{\epsilon}{32 D^5 \omega^3} \Big( 8D^5\omega^2 \big(2r\beta\omega + F\cos(2\theta\omega)\big) + \rho^2(\epsilon \rho^2+4D^3\omega^2)\sin\big(2(\theta-\phi)\omega\big) \Big) \\
	\theta'= -\frac{\epsilon}{64 D^5 \omega^4} \frac{1}{r} \Big( \rho^2(\epsilon \rho^2+4D^3\omega^2)\cos\big(2(\theta-\phi)\omega\big) + 8D^2\omega^2\big( r (\rho^2 {\textcolor{blue}{+8D^3\omega^2\Delta}}) -D^3 F \sin(2\theta\omega)\big) \Big)
\end{dcases}, \label{eqn_polarEOMresonantTongueAvg} 
\end{equation}
where blue are terms not in the previously averaged system \eqref{eqn_CPUT_polarEOMavg}.

Now the task is to analyze the fixed point(s) of \eqref{eqn_polarEOMresonantTongueAvg}.

Similar to before, $\sin^2\big(2(\theta-\phi)\omega\big)+\cos^2\big(2(\theta-\phi)\omega\big)=1$ leads to
\[
	r=\frac{1}{\alpha} 
	\sqrt{\left(2\gamma\omega\right)^2+\left(\frac{\epsilon \alpha \rho^2+16D^2\omega^4\Delta}{4 D^2 w^2}\right)^2}.
\]
$\rho$ can also be exactly obtained or approximated in an analogous way (see Section \ref{sec_CPUT_fixedPtExactlyResonant}).

For the sake of a concise expression, however, from here on we will only construct a leading order in $\epsilon$ approximation of the fixed point, by solving the following approximation of the fixed point equation:
\begin{align*}
	&0= -2\gamma\omega + r\alpha \sin\big(2(\theta-\phi)\omega\big) \\
	&0= 4\omega^2\Delta + r\alpha \cos\big(2(\theta-\phi)\omega\big) \\
	&0= 8D^5\omega^2 \big(2r\beta\omega + F\cos(2\theta\omega)\big) + 4D^3\omega^2\rho^2\sin\big(2(\theta-\phi)\omega\big) \\
	&0= 8D^2\omega^2\big( (\rho^2+8D^3\omega^2\Delta)r-D^3 F \sin(2\theta\omega)\big) + 4D^3\omega^2\rho^2\cos\big(2(\theta-\phi)\omega\big) 
\end{align*}
Repeated applications of $\sin^2(\cdot)+\cos^2(\cdot)=1$ like before show that the only positive real solution for $\rho^2$ is
\begin{align}
	\rho^2 = \frac{2 D^3 \left(\sqrt{\zeta}-4 \alpha  \beta  \gamma  D \omega ^2+32 \Delta  \omega ^4 \left(\Delta  \left(\alpha  D-8 \Delta  \omega ^2\right)-2 \gamma ^2\right)\right)}{16 \gamma ^2 \omega ^2+\left(\alpha  D-8 \Delta  \omega ^2\right)^2}
	\label{eq_CPUT_mixedSlnTongue}
\end{align}
where $\zeta$ is a lengthy expression:
\begin{align*}
	 \zeta = &-4096 \beta ^2 \Delta ^4 \omega ^{10}+1024 \beta  \Delta ^2 \omega ^8 \left(\alpha  D \Delta  (\beta +2 \gamma )-2 \beta  \gamma ^2\right)
	 -64 \omega ^6 \left(\alpha  D \Delta  (\beta +2 \gamma )-2 \beta  \gamma ^2\right)^2 \\
	 &\qquad\qquad +64 \alpha ^2 \Delta ^2 F^2 \omega ^4 +16 \alpha ^2 F^2 \omega ^2 \left(\gamma ^2-\alpha  D \Delta \right)+\alpha ^4 D^2 F^2.
\end{align*}
Due the its length, this result is not insightful yet, but we will see that it helps characterize the boundary of nontrivial steady-state solution in the parameter space. In fact, the non-negativity of the right hand side of \eqref{eq_CPUT_mixedSlnTongue} places a requirement on the perturbation strength $F$. More precisely, since the right hand side is in the form of $C_1+\sqrt{C_2 F^2+C_3}$ and thus monotonic with $F^2$, we solve $\rho^2=0$ with \eqref{eq_CPUT_mixedSlnTongue} to get a critical value of $F$:
\begin{equation} \bm{
	F^*=\frac{4\omega^2}{\alpha}\sqrt{\gamma^2+4\Delta^2\omega^2}\sqrt{\beta^2+16\Delta^2\omega^2}.
	}\label{eq_CPUT_excitationThreshold}
\end{equation}
The requirement for a nontrivial steady-state is $|F|\geq F^*$.

Note $F^*$ has a clean expression despite of the complicated intermediate results such as \eqref{eq_CPUT_mixedSlnTongue}. This expression predicts, for instance, that when the excitation frequency exactly matches the intrinsic frequency (the case of Section \ref{sec_CPUT_fixedPtExactlyResonant}), the excitation strength has to be at least $4\omega^2\gamma\beta/\alpha$ in order for CPUT to work. If the frequencies don't exactly match but deviate by only $\mathcal{O}(\epsilon)$, then the system can still operate, but the threshold on the excitation strength increases in a specific way that depends on the frequency deviation percentage $\Delta$.

\eqref{eq_CPUT_excitationThreshold} well matches the operational boundary numerically obtained in \cite{surappa2017capacitive}. The region of $|F|\geq F^*$ is in the same spirit as a resonance tongue (e.g., \cite{arnol1983remarks}, \cite{broer2000resonance} and references therein), except the latter is historically speaking for a 2-dimensional non-autonomous system but the CPUT considered here is 4-dimensional non-autonomous.

\section{Demonstration 2: Fermi-Pasta-Ulam problem}
\label{sec_FPU}
This section illustrates the improved accuracy of the proposed approach on the Fermi-Pasta-Ulam (FPU) system \cite{FPU:55}. FPU is chosen simply because it is a classical test problem, and we do not claim any new understanding of this profound and extensively investigated problem; instead, we refer to \cite{Fo92, rink2001symmetry, berman2005fermi, FlIvKa05, Hairer:04} for an incomplete list of discussions.

\begin{figure} [h]
\begin{tabular}{c}
\includegraphics[width=\textwidth]{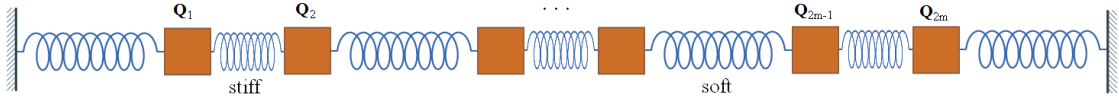}
\end{tabular}
\caption{\footnotesize Fermi-Pasta-Ulam problem: a 1D chain of point masses (exaggerated in the plot) connected alternatively via harmonic stiff and anharmonic soft springs}
\label{fig_FPUmodel}
\end{figure}

\paragraph{Setup.} The FPU problem, illustrated by Figure \ref{fig_FPUmodel}, is a mechanical system governed by the Hamiltonian
\[
    H(Q,P):=\frac{1}{2} \sum_{i=1}^m (P_{2i-1}^2+P_{2i}^2)+\frac{\omega^2}{4} \sum_{i=1}^m (Q_{2i}-Q_{2i-1})^2+ \sum_{i=0}^m (Q_{2i+1}-Q_{2i})^4,
\]
where $Q_1,\cdots,Q_{2m}$ are point mass displacements, $P_i$'s are their corresponding momenta, $Q_0=Q_{2m+1}=0$ are fixed (not part of the $2m$ degrees of freedom), and $\omega \gg 1$.

To better separate the slow and fast motions, the standard approach is to introduce the following canonical transformation
\begin{align*}
	& q_i = (Q_{2i}+Q_{2i-1})/\sqrt{2},	\qquad	q_{m+i}=(Q_{2i}-Q_{2i-1})/\sqrt{2}, \\
	& p_i = (P_{2i}+P_{2i-1})/\sqrt{2},	\qquad	p_{m+i}=(P_{2i}-P_{2i-1})/\sqrt{2},
\end{align*}
and then the transformed system is governed by the Hamiltonian
\begin{align*}
	& H(q,p)=\frac{1}{2} \sum_{i=1}^{2m} p_i^2 + \frac{\omega^2}{2}\sum_{i=m+1}^{2m} q_i^2 + \\
	& \qquad \frac{1}{4} \left( (q_1-q_{m+1})^4 + \sum_{i=1}^{m-1}((q_{i+1}-q_{m+i+1})-(q_i-q_{m+i}))^4 + (q_m+q_{2m})^4 \right).
\end{align*}
FPU is a classical weakly nonlinear system and it exhibits diverse behaviors over at least three timescales (see e.g., \cite{Hairer:04} for a concise review). Either from the Hamiltonian or by physical intuition, it is not difficult to see that (i) each stiff spring's displacement ($q_i$ for $i=m+1,\cdots,2m$) behaves like a harmonic oscillator at $\mathcal{O}(\omega^{-1})$ timescale, and (ii) each center of masses linked by a stiff spring ($q_i$ for $i=1,\cdots,m$) changes more slowly at a timescale of $\mathcal{O}(1)$. In addition, it was known that (iii) the energy exchange among stiff springs is associated with a third timescale of $\mathcal{O}(\omega)$: denote the energy of the $i^{th}$ stiff spring by
\begin{equation}
	I_i=\frac{1}{2}(p_{m+i}^2+\omega^2 q_{m+i}^2),\qquad i=1,\cdots,m
	\label{eq_FPU_adiabatic}
\end{equation}
then $I_i$'s start exchanging values with each other at this timescale. (iv) This energy exchange actually extends to even slower timescales, in fashions that could at least be periodic, quasiperiodic or chaotic (e.g., \cite{Fo92, rink2001symmetry, berman2005fermi, FlIvKa05}). (v) On the other hand, the total energy of the stiff springs is only an $\mathcal{O}(\omega^{-1})$ deviation from a constant.

FPU is another example of coupled oscillators and it again suits the proposed method. In fact, let $q_{\text{slow}}=[q_1,\cdots,q_m]^T$, $p_{\text{slow}}=[p_1,\cdots,p_m]^T$, $q_{\text{fast}}=[q_{m+1},\cdots,q_{2m}]^T$, $p_{\text{fast}}=[p_{m+1},\cdots,p_{2m}]^T$, then the governing dynamics given by Hamilton's equations $q_i'=\partial H / \partial p_i$ and $p_i'=-\partial H / \partial q_i$ are
\[ \begin{dcases}
	q'_{\text{slow}} &= p_{\text{slow}} \\
	p'_{\text{slow}} &= f \\
	q'_{\text{fast}} &= p_{\text{fast}} \\
	p'_{\text{fast}} &= -\omega^2 q_{\text{fast}} + g
\end{dcases} \]
for some functions $f$ and $g$. Now, rescale momentum by $p_{\text{fast}}=\omega v_{\text{fast}}$, $p_{\text{slow}}=v_{\text{slow}}$, and then slow down time by a factor of $\omega$, we have
\[\begin{dcases}
	\dot{q}_{\text{slow}} &= \frac{1}{\omega} v_{\text{slow}} \\
	\dot{v}_{\text{slow}} &= \frac{1}{\omega} f \\
	q'_{\text{fast}} &= v_{\text{fast}} \\
	v'_{\text{fast}} &= -q_{\text{fast}} + \frac{1}{\omega^2} g
\end{dcases}. \]
Therefore, FPU can be put in the form of \eqref{eq_canonicalSystem} with $\epsilon=1/\omega$. Note there $\Omega$ is $4m$-by-$4m$, skew-Hermitian, but its rank is only $2m$.

\begin{figure} [h]
\begin{tabular}{c}
\includegraphics[width=\textwidth]{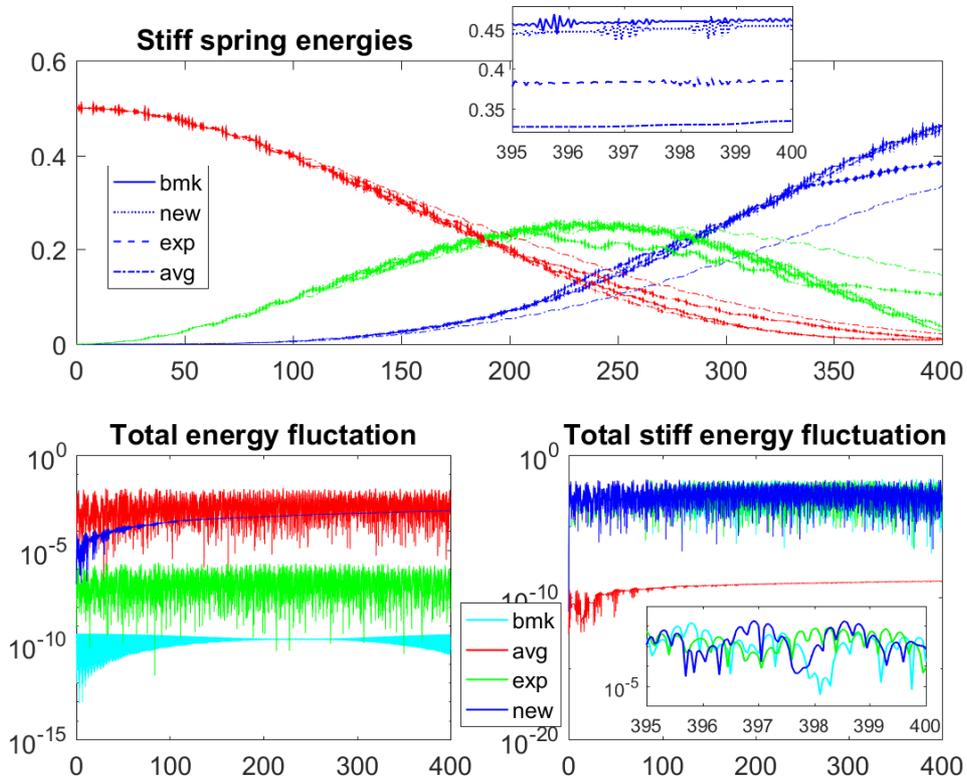}
\end{tabular}
\vspace{-20pt}
\caption{\footnotesize Comparison of classical averaging, improved averaging (denoted by `new'), and fine exponential integration of FPU with a benchmark solution.}
\label{fig_FPU_energyExchange}
\end{figure}

\begin{figure}[h]
\centering
\footnotesize
\subfigure[Errors in slow and fast positions and momenta]{
\includegraphics[width=0.5\textwidth]{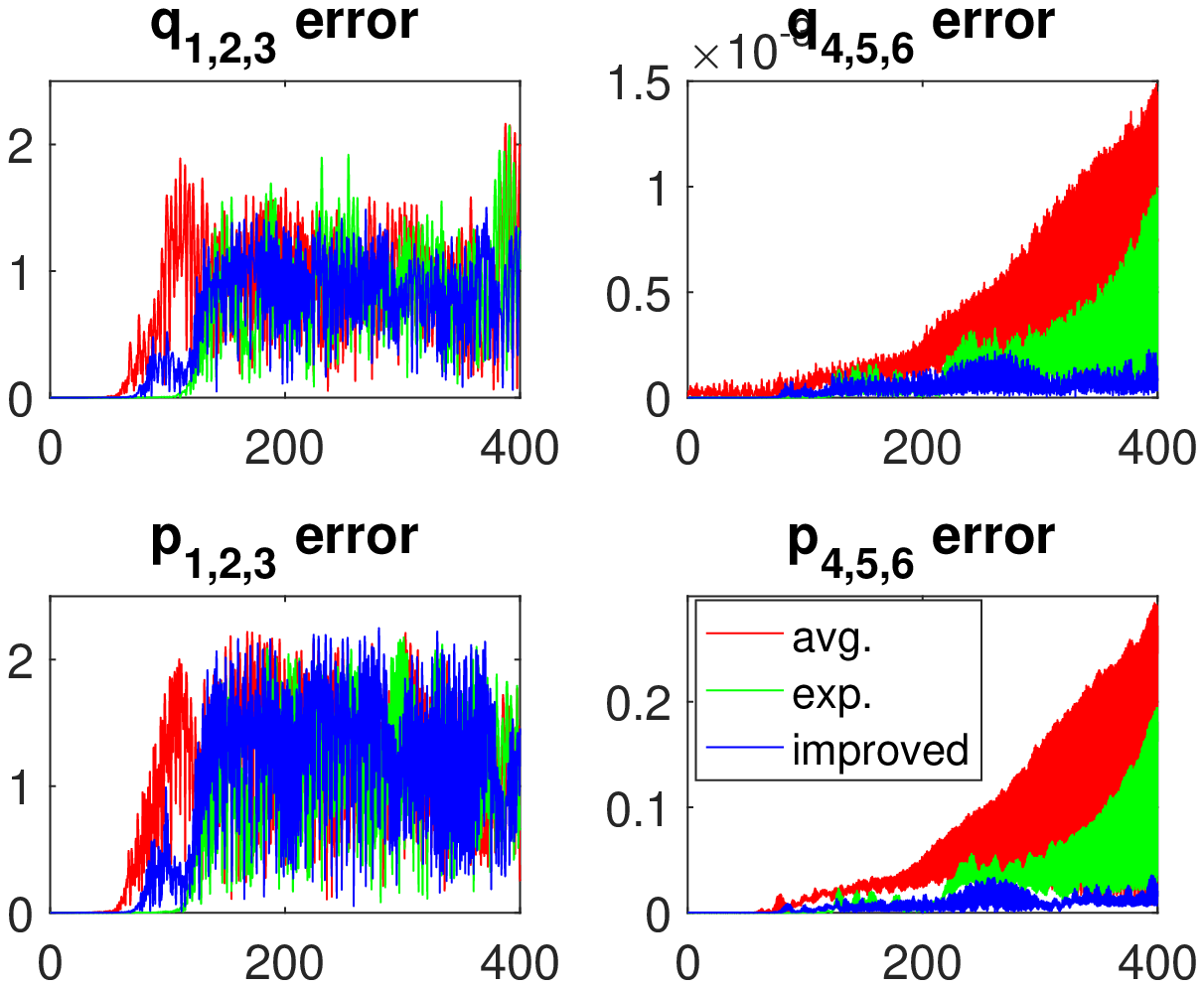}
\label{fig_FPU_qpErrors}
}
\hspace{-20pt}
\subfigure[Errors in stiff spring energies]{
\includegraphics[width=0.5\textwidth]{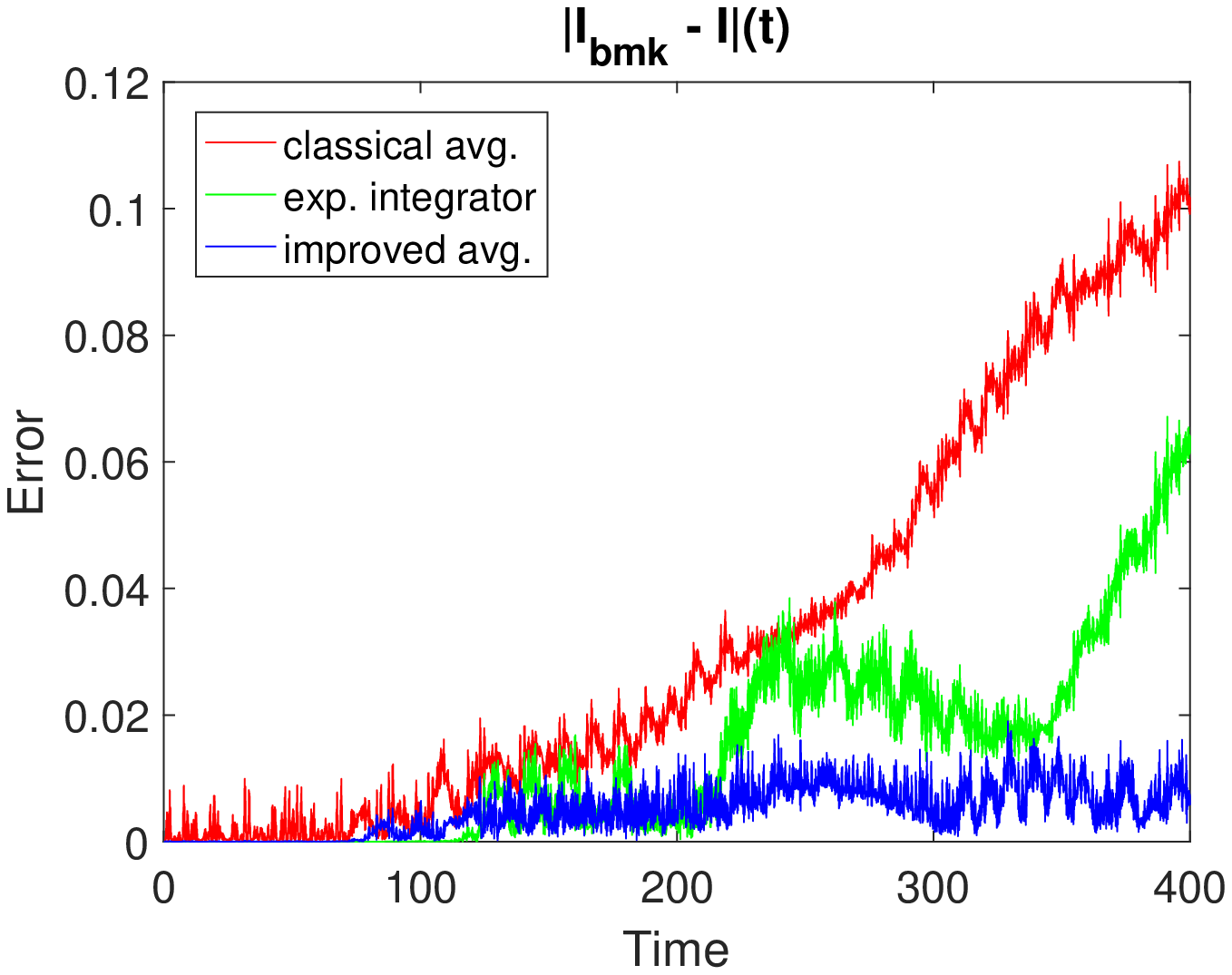}
\label{fig_FPU_Ierrors}
}
\caption{\footnotesize Absolute errors of classical averaging, improved averaging, and fine exponential integration of FPU (measured in the norm of 3-vector)}
\label{fig_FPU_errors}
\end{figure}

\paragraph{Numerical demonstration.} We now show that, in practice, the new averaging method can improve the accuracy of classical averaging beyond the theoretically justified $\mathcal{O}(\epsilon^{-1})$ timescale (i.e., $\mathcal{O}(1)$ time in plotted figures due to previously introduced time rescaling), and this improvement actually extends to $\mathcal{O}(\epsilon^{-2})$ timescale (i.e. $\mathcal{O}(\omega)$ timescale in the figures).

The demonstration will be based on the following experiment: $m=3$, $\omega=200$, simulation time $T=2\omega$, $q(0)=[0,0,0,0,0,0]^T$, $p(0)=[2,0,0,1,0,0]^T$. Classical averaging and improved averaging were based on 4th-order Runge-Kutta integration with $h=0.05$, in which the averaged vector fields (\eqref{eq_oldAveragedVectorField}, \eqref{eq_newAveragedVectorField} and \eqref{eq_CZ}) are computed on the fly via numerical averaging \eqref{eq_numAveragingPeriodic} with $N=10$. Benchmark solution for illustration and error quantification was obtained by a 4th-order symplectic integrator (see e.g., \cite{Hairer:04} or \cite{Tao2016PRE}) with very small timestep of $h=0.01/\omega$. Fine exponential integration was also performed as a side reference, based on a 2nd-order symmetric scheme that only uses one force evaluation per step, with $h=0.05/\omega=0.00025$.

Figures \ref{fig_FPU_energyExchange} and \ref{fig_FPU_errors} illustrate the followings:
\begin{itemize}
\item
	Upper panel of Figure \ref{fig_FPU_energyExchange} demonstrates better accuracy in $I_{1,2,3}$ \eqref{eq_FPU_adiabatic} of the new approach than that of classical averaging and exponential integration (with 200x smaller step size).
\item
	The system is autonomous Hamiltonian and the exact solution should conserve the total energy. Lower left panel of Figure \ref{fig_FPU_energyExchange} focuses on the accuracy of capturing this conservation. Note the benchmark and the exponential integrator are symplectic and thus intrinsically good at energy conservation (see e.g., \cite{Hairer:04} for theory). The averaged dynamics, however, may lose its Hamiltonian structure and were integrated by a non-symplectic integrator (see \cite{FLAVOR10} for ideas for a possible remedy). The improved averaging reduced the artificial energy deviation in classical averaging by magnitudes.
\item
	Lower right panel of Figure \ref{fig_FPU_energyExchange} focuses on the total energy of stiff springs. This quantity, $\sum_{i=1}^m I_i$, is known to be almost a constant, however admitting $\mathcal{O}(1/\omega)$ fluctuations. The classical averaging smoothed out this fluctuation incorrectly, while the improved averaging captured the same amount of fluctuation as the benchmark and the exponential integrator.
\item
	Three columns in Figure \ref{fig_FPU_errors} respectively illustrate deviations from the benchmark in the soft spring positions and momenta, the stiff spring positions and momenta, and the stiff spring energies. Recall the dynamics of these quantities span a wide range of timescales (respectively at $\mathcal{O}(1)$, $\mathcal{O}(1/\omega)$ and $\mathcal{O}(\omega)$ scales). Curiously, although the improved method is more accurate on all these quantities, it is more effective for observables of the stiff springs.
\end{itemize}

\begin{figure} [h]
\centering
\includegraphics[width=0.5\textwidth]{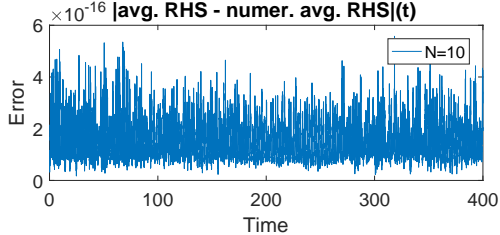}
\caption{\footnotesize Deviations of numerically averaged vector field from the exact average, evaluated in 2-norm along the simulation of classically averaged FPU.}
\label{fig_FPU_numAvgErr}
\end{figure}

In addition, Figure \ref{fig_FPU_numAvgErr} illustrates the high accuracy of numerical averaging \eqref{eq_numAveragingPeriodic}. Near machine precision was obtained using only 10 sample time points per averaged vector field evaluation. Lengthy expressions of the exact averaged vector field can be found in the Appendix.

It needs to be noted that numerical averaging of the vector field does introduce a computational overhead; however, it leads to the ability of using less time steps (see Section \ref{sec_numAvg}). Although the focus of this article is a simple analytical improvement of 1st-order averaging, not numerical averaging, time counts will still be provided for a qualitative (not quantitative) understanding of a curious reader: for the above experiment on an Intel i7-4600 laptop with Windows 7 x64 and MATLAB R2016b, the benchmark, the exponential integrator, classical averaging and the improved averaging respectively spent 99.36, 17.31, 5.98 and 12.11 seconds. Note this is by no means a claim against exponential integrators: we expect improved accuracy if a 4th-order exponential integrator with small step size were used instead; in fact, our approach with $N=1$ will become an exponential integrator (Section \ref{sec_numAvg}). However, as we aimed to quantify the error of the averaged dynamics, high-order method was employed for its integration to prevent the quantification from being polluted by the numerical integration error. The exponential integration, on the hand, only has numerical error, and making it vanish will only produce another benchmark; therefore, we chose a 2nd-order version simply to provide a middle ground.

\section{Demonstration 3: (2+1)-dimensional advection equation with weak nonlinear reaction}
\label{sec_PDE}
\subsection{The setup}
\label{sec_PDE_setup}
Methods in Section \ref{sec_method} generalize to infinite-dimensional problems too. Consider the semilinear initial value problem
\begin{equation}
	\partial_t u = L u + \epsilon f(u,z,t)
	\label{eq_canonicalPDE}
\end{equation}
equipped with initial condition $u(z,0)=u_0(z)$ and appropriate boundary conditions, where $L$ is a skew-Hermitian linear spatial differential operator, and $u(z,t)$ is a vector-valued function of multidimensional spatial coordinate $z$ and scalar time $t$. Here both the linear operator $L$ and the nonlinearity $f$ can be non-local in space, e.g., $(\partial_t u) (z,t)= L[ u(\cdot,t)](z,t) + \epsilon f(u(\cdot,t),z,t)$

To approximate the long time effect of the nonlinearity by classical averaging, one can let $u=e^{Lt}w$, which leads to
\begin{equation}
	\partial_t w = \epsilon e^{-Lt} f\left( e^{Lt}w, z, t \right),
	\label{eq_classicalPreAvgPDE}
\end{equation}
and then use the approximation
\begin{equation}
	\partial_t \bar{w} = \epsilon \left\langle e^{-Lt} f\left( e^{Lt}\bar{w}, z, t \right) \right\rangle_t.
	\label{eq_classicalAvgPDE}
\end{equation}

Such an application of averaging methods to \eqref{eq_canonicalPDE} have already been extensively investigated; see for instance, \cite{mitropolsky2012asymptotic, shtaras1989averaging, verhulst1999averaging} for surveys, \cite{van1979asymptotic} for the validity of the formal extension to infinite dimensions, \cite{buitelaar1993method, krol1989galerkin} for more detailed discussions, \cite{majda1998averaging, haut2014asymptotic} for studies with applied and computational objectives, and \cite{bates1998existence, bates1999persistence} for alternative but related analyses.

The accuracy improvement idea proposed in Section \ref{sec_improvedMethod} works in this infinite dimensional setup too: let $u=e^{Lt}v-\epsilon P[v]$ instead, where $P[v]$ and $C[v]$ are functions of $z$ defined as
\begin{equation}
	P[v] := \widehat{L^{-1}} C[v],	\qquad C[v](z)=\left\langle f\left( \left(e^{Lt}v(\cdot,t)\right)(z,t), z, t\right)\right\rangle_t,
	\label{eq_improvedAvgPDE_CZandPZ}
\end{equation}
and then
\[
	\partial_t v = \epsilon e^{-L t} \left( f \left( e^{Lt} v - \epsilon P[v], z, t \right) - L P[v] \right) + \mathcal{O}(\epsilon^2).
\]
One thus uses the approximation
\begin{equation}
	\partial_t \bar{v} = \epsilon \left\langle e^{-L t} \left( f \left( e^{Lt} \bar{v} - \epsilon P[\bar{v}], z, t \right) - L P[\bar{v}] \right) \right\rangle_t
	\label{eq_improvedAvgPDE}
\end{equation}
as an improvement of \eqref{eq_classicalAvgPDE}.

Examples of \eqref{eq_canonicalPDE} include advection-reaction equations, 2nd-order wave equations with small nonlinearity, and nonlinear Schr\"odinger equations. From now on, we will be considering a specific illustrative example, which is an advection-reaction PDE in 2D space:
\begin{equation}
	\partial_t u = a \partial_x u + b \partial_y u + \epsilon f(u,x,y,t),
\end{equation}
with periodic boundary conditions $u(x,\cdot,\cdot)=u(x+L_1,\cdot,\cdot)$ and $u(\cdot,y,\cdot)=u(\cdot,y+L_2,\cdot)$. Without loss of generality, assume $a=1$ and $b=1$. For a concrete demonstration, a specific autonomous local nonlinearity was chosen: 
\[
	f(u,x,y,t) = \frac{\cos(u(x,y,t))}{1+\frac{1}{2}\cos\left(\frac{4\pi}{L_1}x\right)\sin\left(\frac{2\pi}{L_2}y\right)},
\]
for which we may already be unable to find an analytical expression of the classically averaged system \eqref{eq_classicalAvgPDE}; see Section \ref{sec_PDE_periodicCase}.

In this problem, the differential operator $L=a\partial_x+b\partial_y$, and
\begin{equation}
	(e^{Lt} v)(x,y)=v(x+at,y+bt).
	\label{eq_advectionOperator}
\end{equation}
Based on the periodic boundary condition and an assumption on the solution regularity, $v$ can be expressed in Fourier series as
\[
	v(x,y)=\sum_{j=-\infty}^\infty \sum_{k=-\infty}^\infty \hat{v}_{jk} \exp\left( i 2\pi \left(j \frac{x}{L_1}+ k\frac{y}{L_2}\right)\right).
\]
This way, it is easy to see that $L$ eigenvalues are $i 2\pi \left(\frac{j}{L_1}+\frac{k}{L_2}\right)$ for integers $j,k$, and an alternative expression to \eqref{eq_advectionOperator} is
\begin{equation}
	e^{Lt}v=\sum_{j=-\infty}^\infty \sum_{k=-\infty}^\infty \exp(i\omega_{jk}t) \hat{v}_{jk} \exp\left( i 2\pi \left(j \frac{x}{L_1}+ k\frac{y}{L_2}\right)\right),
	\label{eq_advectionOperatorFourier}
\end{equation}
where the intrinsic oscillation frequencies are $\omega_{jk}=2\pi \left(\frac{j}{L_1}+\frac{k}{L_2}\right)$.

\subsection{The periodic case}
\label{sec_PDE_periodicCase}
When $L_1/L_2$ is a rational number, denote the ratio by $\alpha/\beta$ for relatively prime integers $\alpha$ and $\beta$. It is easy to see from \eqref{eq_advectionOperatorFourier} that $e^{Lt}$ (and $e^{-Lt}$ too) is periodic with the smallest period being $T=\beta L_1=\alpha L_2=\text{lcm}(L_1,L_2)$. This periodic case will also be occasionally referred to as the resonant case.

In order to perform classical averaging, the right hand side of \eqref{eq_classicalAvgPDE} needs to be computed. Note
\begin{equation}
	e^{-Lt}f\left( (e^{Lt} w)(x,y,t), x, y \right) = e^{-Lt}f(w(x+t,y+t,t),x,y) = f(w(x,y,t),x-t,y-t).
	\label{eq_PDE_classicalAvgOriginalVectorField}
\end{equation}
Therefore, the classically averaged equation is
\begin{equation}
	\partial_t \bar{w} = \epsilon \frac{\cos \bar{w}}{\text{lcm}(L_1,L_2)} \int_0^{\text{lcm}(L_1,L_2)} \frac{1}{1+\frac{1}{2}\cos\left(\frac{4\pi}{L_1}(x-t) \right)\sin\left(\frac{2\pi}{L_2}(y-t)\right)} dt.
\end{equation}
In this resonant case, we have not found a general closed-form expression for this integral, except when $L_1=2L_2$. When $L_1=2L_2$, let $\tau=(t-x)/L_2$, then
\begin{align}
	&\quad \frac{1}{\text{lcm}(L_1,L_2)} \int_0^{\text{lcm}(L_1,L_2)} \frac{1}{1+\frac{1}{2}\cos\left(\frac{4\pi}{L_1}(x-t) \right)\sin\left(\frac{2\pi}{L_2}(y-t)\right)} dt \nonumber\\
	&= \frac{1}{2} \int_0^2 \frac{1}{1+\frac{1}{2}\cos\left(-2\pi\tau\right)\sin\left(\frac{2\pi}{L_2}(y-x)-2\pi\tau \right)} d\tau \nonumber\\
	&= \frac{1}{2} \int_0^2 \frac{1}{1+\frac{1}{4}\sin\left(\frac{2\pi}{L_2}(y-x)\right)-\frac{1}{4}\sin(4\pi\tau)} d\tau \label{eq_PDEperiodic_analyticalClassicalAvg}  \\
	&= \frac{4}{\sqrt{16\left(1+\frac{1}{4}\sin\left(\frac{2\pi}{L_2}(y-x)\right)\right)^2-1}}. \nonumber
\end{align}
When $L_1 \neq 2 L_2$, however, the denominator in \eqref{eq_PDEperiodic_analyticalClassicalAvg} contains two $\tau$-dependent terms instead of one, which make us unable to evaluate the integral. Therefore, we instead compute the right hand side of \eqref{eq_classicalAvgPDE} numerically for any given $\bar{w}$. More precisely, the exponential maps are approximated by spectral integrations, and the time averaging is numerically approximated by composite trapezoidal rule (eq. \ref{eq_numAveragingPeriodic}). These allow the classically averaged system to be constructed and then numerically integrated.

In order to perform the improved averaging, the function $C[v]$ \eqref{eq_improvedAvgPDE_CZandPZ} first needs to be computed. Unfortunately, since
\[
	f\left( (e^{Lt} v)(x,y,t), x, y \right) = f(v(x+t,y+t,t),x,y)
\]
involves nonlocality through the unknown $v$, its average $\int_0^T f(v(x+t,y+t,t),x,y) dt/T$ does not admit a closed-form expression. However, given $v$, this integral can be numerically computed, and the proposed improved approach is thus implementable via numerical averaging, similar to the classical averaging in the $L_1 \neq 2 L_2$ case. Two averages have to be computed per vector field evaluation, first for $C[v]$, and then for the right hand side of \eqref{eq_improvedAvgPDE} (where $P[v]$ is computed via a spectral discretization of $L$).

\begin{figure}[h]
\centering
\footnotesize
\subfigure[$L_1=2\sqrt{3}$, $L_2=\sqrt{3}$, in which case the classically averaged vector field is analytically available.]{
\includegraphics[width=0.48\textwidth]{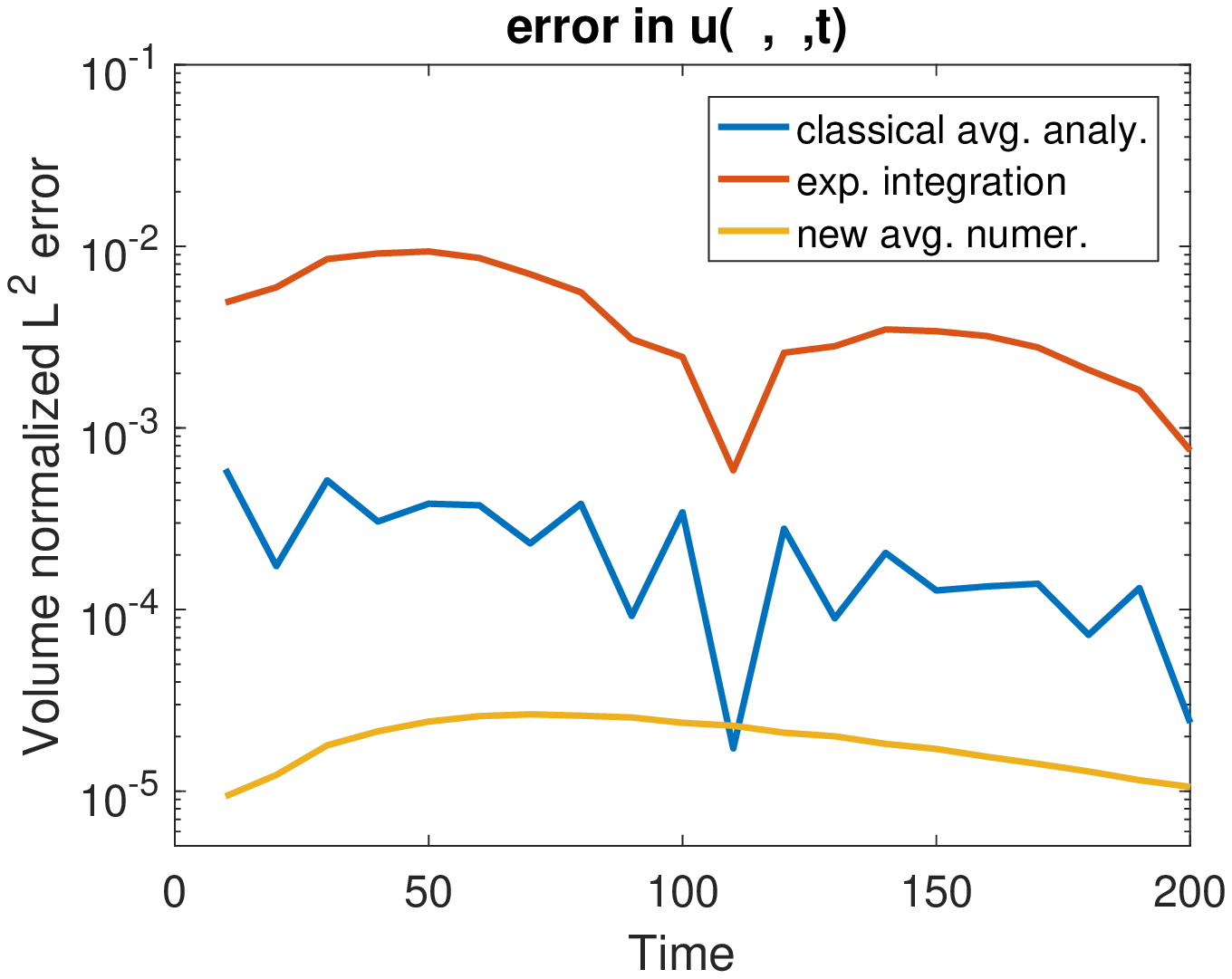}
\label{fig_PDE_resonant_L1eq2L2}
}
\subfigure[$L_1=\sqrt{2}$, $L_2=2\sqrt{2}$, in which case the classically averaged vector field is numerically computed.]{
\includegraphics[width=0.48\textwidth]{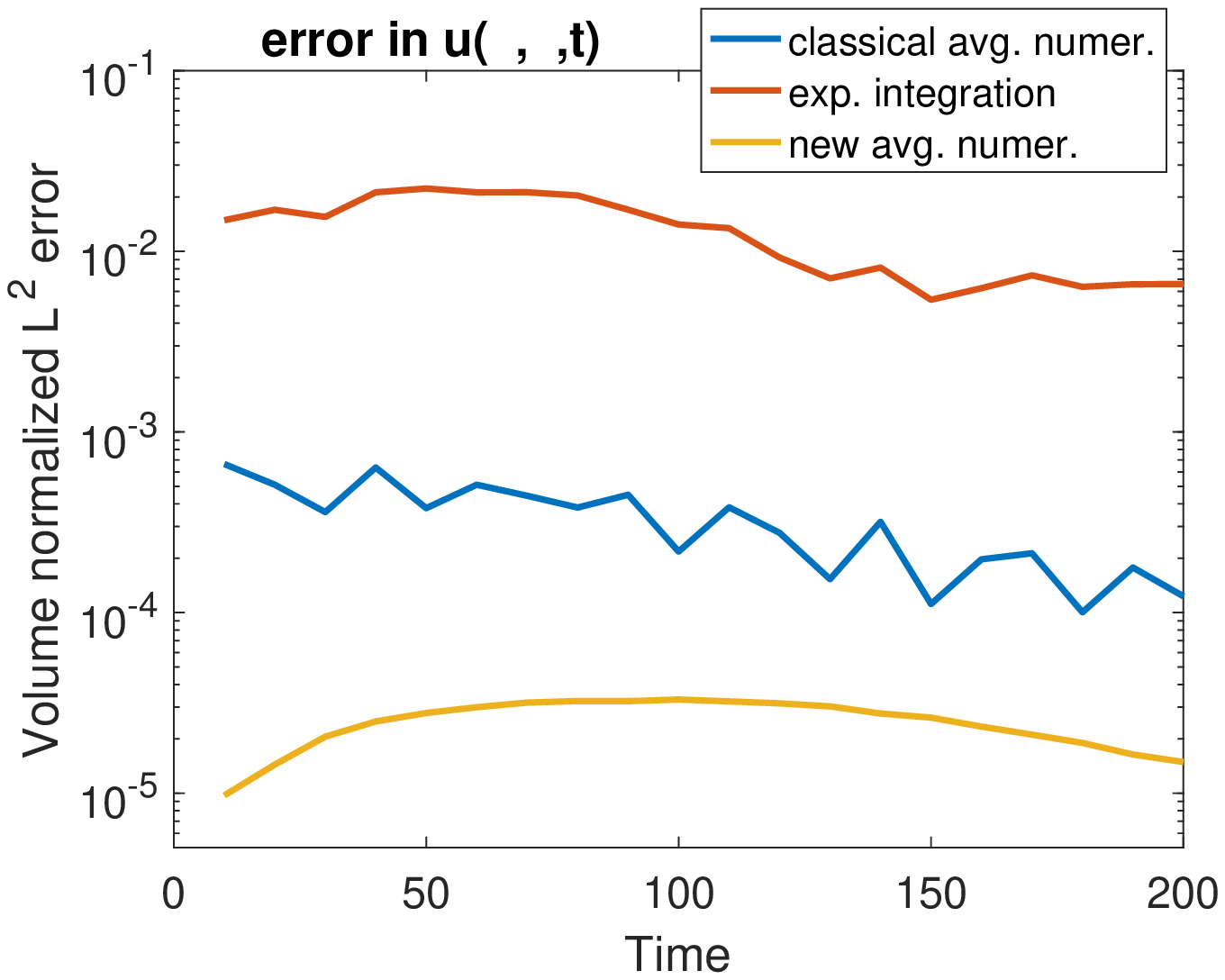}
\label{fig_PDE_resonant_L1neq2L2}
}
\caption{\footnotesize The periodic advection plus nonlinearity problem: absolute errors of classical averaging, exponential integration (with same large time step), and improved averaging}
\label{fig_PDE_resonant}
\end{figure}

Figure \ref{fig_PDE_resonant} illustrates the accuracy of the improved averaging \eqref{eq_improvedAvgPDE}. It reduces the classical averaging error by one order of magnitude, while costing $\sim$1.6x computational time (when the classically averaged system is numerically constructed).

More specifically, all numerical time-averagings in this section (the periodic case) use $N=10$ samples. The benchmark solution was obtained by RK4 time-integration and pseudospectral space-discretization, with a timestep of $0.0001$, 50 Fourier modes for $x$ and 20 for $y$. All initial conditions are $u(x,y,0)=\sin(\sin(2\pi x/L_1)+2\pi y/L_2)/4$. $\epsilon=0.01$, total simulation time is $2\epsilon^{-1}$, and all tested methods use a large timestep of $10$. Simulations based on classical and improved averaging employ RK4 to integrate the numerically averaged vector fields. A 2nd-order exponential integrator was used as a comparison, based on Strang splitting and RK2 for the nonlinear part (note for this problem an efficient 4th-order exponential integrator is less trivial to construct than that for the FPU problem, because the exact flow of $\partial_t \hat{u}=\epsilon f(\hat{u},x,y,t)$ is no longer available).

The benchmark, the exponential integrator, the classical averaging, and the improved averaging respectively spent 161.58, 0.02, 0.52, 0.82 seconds on the computation (using an Intel i7-4600 laptop with Windows 7 x64 and MATLAB R2016b). In the $L_1=2L_2$ case where the classically averaged vector field admits closed-form expression (Figure \ref{fig_PDE_resonant_L1eq2L2}), the method spent 0.03 seconds instead of 0.52 seconds; in this case, the errors of classical and improved averaging approaches, averaged over all time steps, are respectively $27.03\times 10^{-5}$ and $1.92\times 10^{-5}$, and the gain in accuracy is $\approx 14.2$-fold. In the case of Figure \ref{fig_PDE_resonant_L1neq2L2} ($2L_1=L_2$, no closed-form averaged vector field), the classical and improved errors are respectively $36.06\times 10^{-5}$ and $2.53\times 10^{-5}$, and the gain in accuracy is $\approx 14.3$-fold.

\subsection{The quasiperiodic case}
\label{sec_PDE_quasiperiodicCase}

When $L_1/L_2$ is irrational, $e^{Lt}$ and $e^{-Lt}$ are only quasiperiodic (see \eqref{eq_advectionOperatorFourier} and Section \ref{sec_PDE_periodicCase} for elaborations), and we are in a non-resonant situation.

To perform classical averaging, the time average of $f(w,x-t,y-t)$ \eqref{eq_PDE_classicalAvgOriginalVectorField} is again needed. Recall $f(w,x,y)$ is $L_1$-periodic in $x$ and $L_2$-periodic in $y$, and therefore the classically averaged equation,
\[
	\partial_t \bar{w}=\epsilon \cos\bar{w} \lim_{T\rightarrow \infty}\frac{1}{T}\int_0^T \frac{1}{1+\frac{1}{2}\cos\left(\frac{4\pi}{L_1}(x-t) \right)\sin\left(\frac{2\pi}{L_2}(y-t)\right)} dt,
\]
can be computed using Birkhoff ergodic theorem \cite{birkhoff1931proof}: the dynamics of $t \mapsto \left[\frac{x-t}{L_1},\frac{y-t}{L_2}\right]$ is ergodic on the torus with respect to uniform measure, and thus
\[
	\partial_t \bar{w}=\epsilon \cos\bar{w} \int_0^1\int_0^1 \frac{1}{1+\frac{1}{2}\cos\left(4\pi\lambda_1 \right)\sin\left(2\pi\lambda_2\right)} d\lambda_1 d\lambda_2 = \epsilon \cos\bar{w} \frac{4}{\sqrt{3}\pi} K(-1/3),
\]
where $K(\cdot)$ is the complete elliptic integral of the first kind, and $K(-1/3)\approx 1.4599$.

To perform the improved averaging \eqref{eq_improvedAvgPDE}, numerical averaging is employed for the same reason as discussed in Section \ref{sec_PDE_periodicCase}. The only difference is, as the time averaging operator is no longer over a period but $\lim_{T\rightarrow\infty}\frac{1}{T}\int_0^T$, we use the method of weighted Birkhoff averaging (\cite{das2017quantitative}, see also eq.\ref{eq_numAveragingQuasiperiodic} and \ref{eq_weightsBirkhoff}) instead of composite trapezoidal rule to approximate the time averages.

\begin{figure}[h]
\centering
\footnotesize
\subfigure[$N=100$]{
\includegraphics[width=0.48\textwidth]{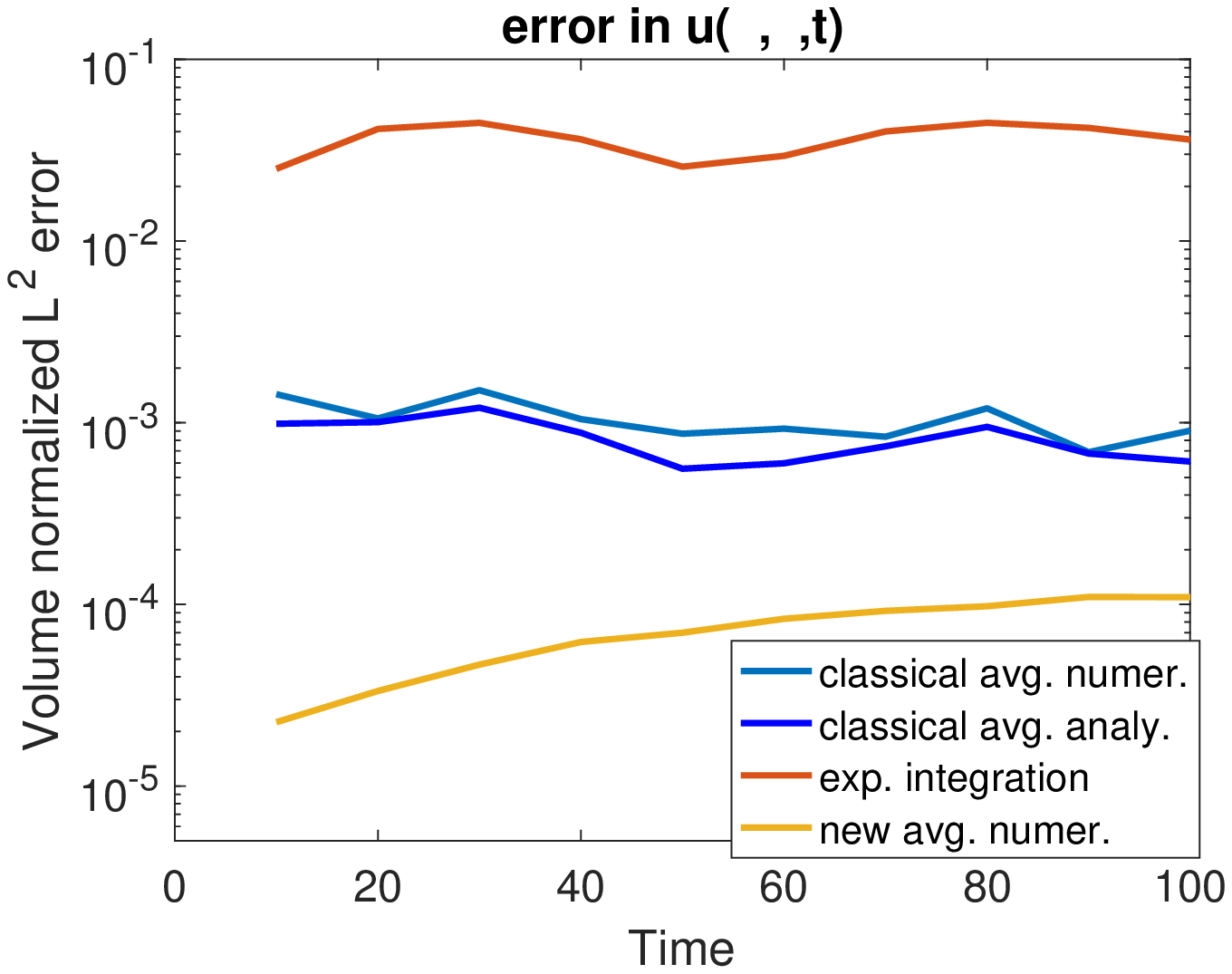}
\label{fig_PDE_nonresonant_N100}
}
\subfigure[$N=1000$]{
\includegraphics[width=0.48\textwidth]{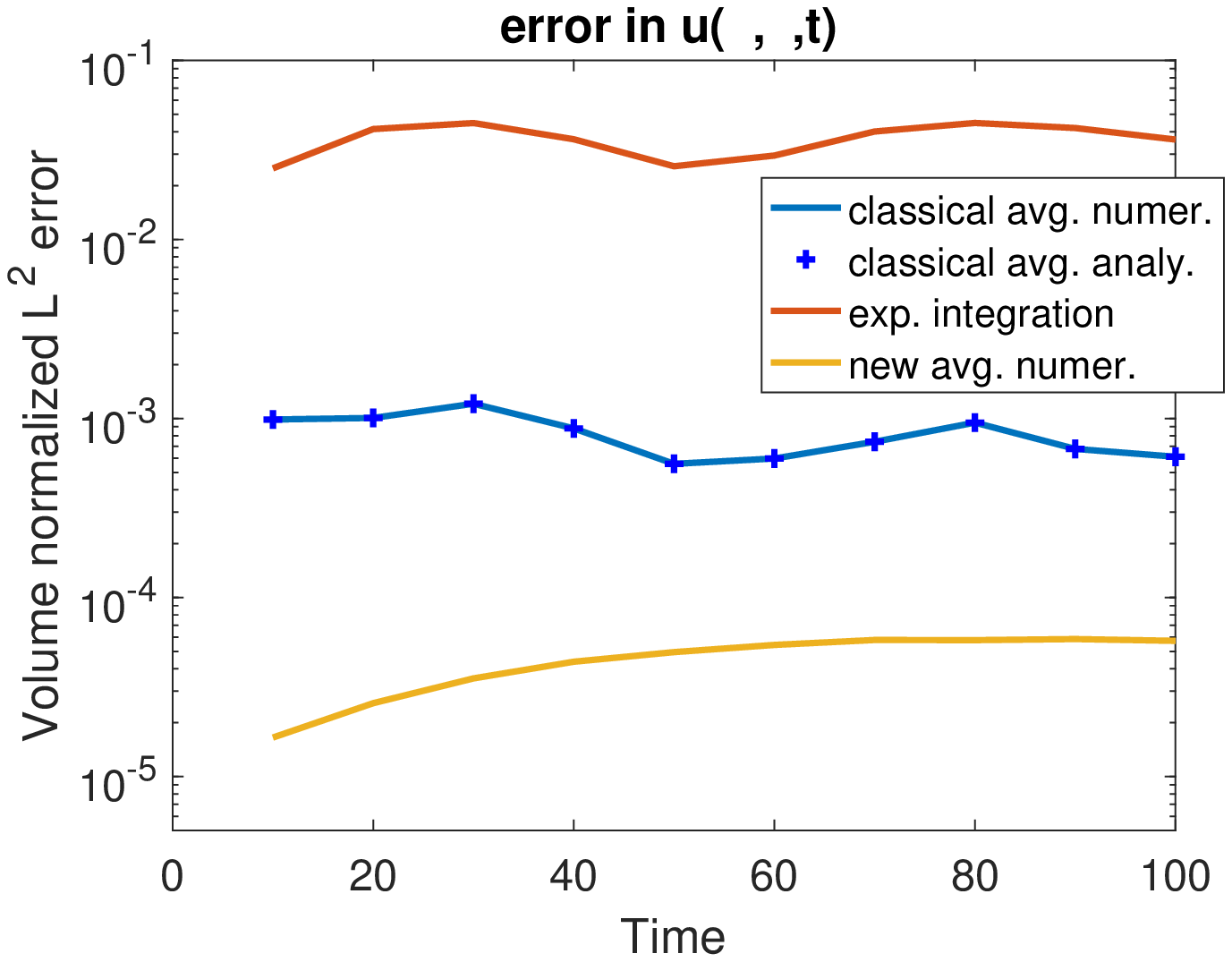}
\label{fig_PDE_nonresonant_N1000}
}
\caption{\footnotesize The quasiperiodic advection plus nonlinearity problem: absolute errors of classical averaging, exponential integration (with same large time step), and improved averaging. $L_1=2\pi$ and $L_2=1$.}
\label{fig_PDE_nonresonant}
\end{figure}

Figure \ref{fig_PDE_nonresonant} illustrates the accuracy of the improved averaging in a quasiperiodic setup. The new method again reduces the error of the classical averaging by one order of magnitude. In terms of computational cost, it takes $\sim$1.6x the computational time of numerically performed classical averaging, and $\sim$76.3x the time of analytically performed classical averaging (when the new method uses $N=100$). While it is significantly slower when compared to the analytically averaged problem, the computation is still up-scaled because its total time remains bounded and independent of $\epsilon$ for simulation till $\mathcal{O}(\epsilon^{-1})$ physical time.

In greater details, since functions to be time-averaged in this section are only quasiperiodic, more samples are used in each numerical averaging. Figure \ref{fig_PDE_nonresonant_N100} uses $N=100$ and Figure \ref{fig_PDE_nonresonant_N1000} uses $N=1000$, and in both cases a fixed $\delta t=0.17321\approx\sqrt{3}/10$ is used (see eq.\ref{eq_numAveragingQuasiperiodic}) to avoid under-sampling due to resonance with intrinsic frequencies. Other setups are as in Section \ref{sec_PDE_periodicCase}.

When $N=100$, the benchmark, the exponential integrator, the integration of the analytically obtained classical averaged system, the integration of the numerically approximated classical averaged system, and the integration of the improved averaged system respectively spent 71.78, 0.06, 0.06, 11.72, 17.78 seconds on the computation. In this case, one sees from Figure \ref{fig_PDE_nonresonant_N100} that the numerical averaging introduces approximation error in addition to the error of the analytically averaged system. Increasing to $N=1000$ makes this approximation error negligible, as Figure \ref{fig_PDE_nonresonant_N1000} shows, but the price to pay is significantly increased computational time, which multiplied from 11.72 and 17.78 seconds to 119.67 and 175.94 seconds. In terms of trajectory error (averaged over all time steps), numerically-approximated classical averaged system and improved averaged system respectively yield $80.98\times 10^{-5}$ and $10.63\times 10^{-5}$ when $N=100$ ($\approx 7.62$-fold improvement), and $80.78\times 10^{-5}$ and $4.57\times 10^{-5}$ when $N=1000$ ($\approx 17.68$-fold improvement).

Considering that $e^{Lt}$ has not only two frequencies $2\pi/L_1=1$ and $2\pi/L_2=2\pi$ but all their integer combinations (see \eqref{eq_advectionOperatorFourier}), being able to accurately average quasiperiodic vector fields containing all these frequencies (with a medium/high accuracy by using $N=100$/$1000$ samples) is rather nontrivial. We attribute this to the fast (super-polynomial) convergence of weighted Birkhoff averaging \cite{das2017quantitative}.

\section{Acknowledgment}

MT has been partially supported by NSF grant DMS-1521667 and ECCS-1829821.
MT sincerely thanks Levent Degertekin for suggesting the CPUT problem and Sushruta Surappa for generous discussions on the experimental details of CPUT. MT is grateful for encouraging discussions with Evelyn Sander, Rafael de la Llave, Chongchun Zeng, Yoshitaka Saiki, and James Yorke.

\section*{Appendix}
\paragraph{Exact expressions of the classically averaged vector field of FPU.}

For the $m=3$ case, let $X=[q_1,q_2,q_3,p_1,p_2,p_3,q_4,q_5,q_6,p_4,p_5,p_6]^T$ and $Y=e^{-\Omega t}X=[x_1,x_2,x_3,y_1,y_2,y_3,x_4,x_5,x_6,y_4,y_5,y_6]^T$, then the averaged dynamics of $Y$ is given by $\dot{\bar{Y}}=\mathcal{F}(\bar{Y})$, where

\begin{align*}
	& \mathcal{F}(Y)=\Big[ y_1, y_2, y_3, \\
	& -\left(x_1 \left(3 y_4^2+\left(2 x_1^2+3 x_4^2\right) \omega ^2\right)+(x_1-x_2) \left(3 (y_4+y_5)^2+\left(2 (x_1-x_2)^2+3 (x_4+x_5)^2\right) \omega ^2\right)\right)/(2 \omega ^2), \\
	& \big((x_1-x_2) \left(3 (y_4+y_5)^2+\left(2 (x_1-x_2)^2+3 (x_4+x_5)^2\right) \omega ^2\right) \\
		& \qquad -(x_2-x_3) \left(3 (y_5+y_6)^2+\left(2 (x_2-x_3)^2+3 (x_5+x_6)^2\right) \omega ^2\right)\big)/(2 \omega ^2), \\
	& \left( (x_2-x_3) \left(3 (y_5+y_6)^2+\left(2 (x_2-x_3)^2+3 (x_5+x_6)^2\right) \omega ^2\right)-x_3 \left(3 y_6^2+\left(2 x_3^2+3 x_6^2\right) \omega ^2\right) \right)/(2 \omega ^2), \\
	& 3 \big(\left(4 (2 y_4+y_5) x_1^2-8 x_2 (y_4+y_5) x_1+\left(2 x_4^2+2 x_5 x_4+x_5^2\right) y_4+(x_4+x_5)^2 y_5+4 x_2^2 (y_4+y_5)\right) \omega ^2 \\
		& \qquad + 2 y_4^3+3 y_5 y_4^2+3 y_5^2 y_4+y_5^3 \big) / (8 \omega ^4), \\
	& 3 \big(y_4^3+3 y_5 y_4^2+3 y_5^2 y_4+(4 (x_1-x_2)^2+(x_4+x_5)^2) \omega ^2 y_4+2 y_5^3+y_6^3+3 y_5 y_6^2 \\
		& \qquad +((x_4^2+2 x_5 x_4+2 x_5^2+x_6^2+4 (x_1^2-2 x_2 x_1+2 x_2^2+x_3^2-2 x_2 x_3)+2 x_5 x_6) y_5 \\
		& \qquad +(4 (x_2-x_3)^2+(x_5+x_6)^2) y_6) \omega ^2+3 y_5^2 y_6 \big) / (8 \omega ^4), \\
	& 3 \big(y_5^3+3 y_6 y_5^2+3 y_6^2 y_5+2 y_6^3+(4 (y_5+y_6) x_2^2-8 x_3 (y_5+y_6) x_2+(x_5+x_6)^2 y_5 \\
		& \qquad +(x_5^2+2 x_6 x_5+2 x_6^2) y_6+4 x_3^2 (y_5+2 y_6)) \omega ^2 \big) / (8 \omega ^4), \\
	& 3 \big(-(2 x_4+x_5) y_4^2-2 (x_4+x_5) y_5 y_4-(x_4+x_5) y_5^2-(4 (2 x_4+x_5) x_1^2-8 x_2 (x_4+x_5) x_1 \\
		& \qquad +4 x_2^2 (x_4+x_5)+(2 x_4+x_5) (x_4^2+x_5 x_4+x_5^2)) \omega ^2 \big) / (8 \omega ^2), \\
	& -3 \big( (4 (x_4+x_5) x_1^2-8 x_2 (x_4+x_5) x_1+4 x_3^2 (x_5+x_6)-8 x_2 x_3 (x_5+x_6) \\
		& \qquad +4 x_2^2 (x_4+2 x_5+x_6)+(x_4+2 x_5+x_6) (x_4^2+(x_5-x_6) x_4+x_5^2+x_6^2+x_5 x_6)) \omega ^2 \\
		& \qquad +x_4 (y_4+y_5)^2+x_6 (y_5+y_6)^2+x_5 (y_4^2+2 y_5 y_4+2 y_5^2+y_6^2+2 y_5 y_6) \big) / (8 \omega ^2), \\
	& 3 \big(-(x_5+x_6) y_5^2-2 (x_5+x_6) y_6 y_5-(x_5+2 x_6) y_6^2-(4 (x_5+x_6) x_2^2-8 x_3 (x_5+x_6) x_2 \\
		& \qquad +(x_5+2 x_6) (4 x_3^2+x_5^2+x_6^2+x_5 x_6)) \omega ^2 \big) /(8 \omega ^2) \Big]^T.
\end{align*}

\footnotesize
\bibliographystyle{siam}
\bibliography{../molei27}

\end{document}